\documentclass[11pt]{amsart}
\usepackage{hyperref}
\usepackage{amsfonts}
\usepackage{amsmath}
\usepackage{amsthm}
\usepackage{bm}
\usepackage{bbm}
\usepackage{graphicx}
\usepackage{float}
\usepackage{enumerate}
\usepackage{natbib}
\usepackage{amssymb}
\usepackage{amsthm}
\usepackage{latexsym}
\usepackage{color}
\usepackage{mathrsfs}
\usepackage{resizegather}
\usepackage{caption}
\usepackage{subcaption}
\usepackage{float}
\usepackage{pdfsync}
\usepackage{mathtools}
\usepackage{blindtext}
\usepackage[utf8]{inputenc}
\usepackage{makecell}
\usepackage{natbib}

\usepackage[hmarginratio=1:1,top=32mm,columnsep=20pt]{geometry}

\usepackage[T1]{fontenc}
\usepackage{color}	
\usepackage[english]{babel} 

\usepackage{enumitem} 
\setlist[itemize]{noitemsep} 

\usepackage{leftidx}   
\usepackage{mathrsfs}
\usepackage[all]{xy}

\setcitestyle{numbers,square}

\usepackage{filecontents}

\usepackage{fancyhdr}

\setcitestyle{authoryear,open={(},close={)}}

\begin{document}
\allowdisplaybreaks[4]
\newtheorem{theorem}{Theorem}
\newtheorem{lemma}{Lemma}
\newtheorem{pron}{Proposition}

\newtheorem{proposition}{Proposition}[section]

\newtheorem{re}{Remark}
\newtheorem{thm}{Theorem}[section]

\newtheorem{Corol}{Corollary}
\newtheorem{exam}{Example}
\newtheorem{defin}{Definition}
\newtheorem{remark}{Remark}
\newtheorem{property}{Property}
\newtheorem{assumption}{Assumption}
\newcommand{\blue}{\color{blue}}
\newcommand{\red}{\color{red}}
\newcommand{\purple}{\color{purple}}
\newcommand{\la}{\frac{1}{\lambda}}
\newcommand{\sectemul}{\arabic{section}}
\renewcommand{\theequation}{\sectemul.\arabic{equation}}
\renewcommand{\thepron}{\sectemul.\arabic{pron}}
\renewcommand{\thelemma}{\sectemul.\arabic{lemma}}
\renewcommand{\there}{\sectemul.\arabic{re}}
\renewcommand{\thethm}{\sectemul.\arabic{thm}}
\renewcommand{\theCorol}{\sectemul.\arabic{Corol}}
\renewcommand{\theexam}{\sectemul.\arabic{exam}}
\renewcommand{\thedefin}{\sectemul.\arabic{defin}}
\renewcommand{\theremark}{\sectemul.\arabic{remark}}

\def\REF#1{\par\hangindent\parindent\indent\llap{#1\enspace}\ignorespaces}
\def\lo{\left}
\def\ro{\right}
\def\be{\begin{equation}}
\def\ee{\end{equation}}
\def\beq{\begin{eqnarray*}}
\def\eeq{\end{eqnarray*}}
\def\bea{\begin{eqnarray}}
\def\eea{\end{eqnarray}}
\def\o{\overline}
\newcommand{\bH}{\mathbf{H}}
\newcommand{\bg}{\mathbf{g}}
\newcommand{\ba}{\mathbf{a}}
\newcommand{\bb}{\mathbf{b}}
\newcommand{\bT}{\mathbf{T}}
\newcommand{\bh}{\mathbf{h}}
\newcommand{\bY}{\mathbf{Y}}
\newcommand{\cid}{\stackrel{d}{\to}}
\newcommand{\cip}{\stackrel{P}{\to}}

\newcommand{\reals}{{\mathbb R}}
\newcommand{\bbr}{\reals}
\newcommand{\eid}{\overset{d}{=}}

\newcommand{\bx}{{\bf x}}
\newcommand{\by}{{\bf y}}
\newcommand{\bz}{{\bf z}}
\newcommand{\bu}{{\bf u}}
\newcommand{\bv}{{\bf v}}
\newcommand{\bw}{{\bf w}}
\newcommand{\bt}{{\bf t}}

\newcommand{\BX}{{\bf X}}
\newcommand{\BY}{{\bf Y}}
\newcommand{\BW}{{\bf W}}
\newcommand{\BT}{{\bf T}}
\newcommand{\BZ}{{\bf Z}}
\newcommand{\BG}{{\bf G}}

\newcommand{\BPsi}{\text{\boldmath $\Psi$}}
\newcommand{\bxi}{\text{\boldmath $\xi$}}
\newcommand{\bgamma}{\text{\boldmath $\gamma$}}

\newcommand{\cov}{\text{Cov}}
\newcommand{\Cov}{\text{Cov}}

\newcommand{\one}{{\bf 1}}
\newcommand{\bbz}{\mathbb Z}
\newcommand{\bbc}{\mathbb C}
\newcommand{\PP}{\mathbb P}

\newcommand{\calF}{\mathcal F}
\newcommand{\calH}{\mathcal H}

\newcommand{\EE}{\mathbb E}

\newcommand{\vep}{\varepsilon}

\title[Preferential Attachment with Deletion]{How Robust is Scale-Free Structure? Phase Transitions in Preferential Attachment Model under Attack}

\author{Xiaoyun Gong}
\address{Center for Applied Mathematics\\
Cornell University}
\email{xg332@cornell.edu}

\author{Gennady Samorodnitsky$^*$}
\address{School of Operations Research and Information Engineering\\
Cornell University}
\email{gs18@cornell.edu}

\numberwithin{equation}{section}
\thanks{ $^*$The corresponding author. Research  partially
  supported by AFOSR grant FA9550-26-1-B146 at Cornell University.}

\subjclass{Primary 05C80, 60G70}
 \keywords{network robustness, attack, preferential attachment,
   scale-free, power tails, exponential tails, phase transitions}


 \begin{abstract}
Will an attack on a network affect its structure in a major way? In
this work we study this question by investigating a dynamic preferential
attachment-type graph in which an attacker interferes with the growth
mechanism. We show that our model of an attack exhibits a phase
transition. If the skill of the attacker is low, the network
preserves its scale-free structure inherited from the underlying
preferential attachment structure (even though the tail of the degree
distribution becomes lighter due to the attack). When the skill of the
attacker increases and crosses a critical boundary, the scale-free
property is destroyed. The skill of the attacker is  expressed as the
probability that, at a given step, the attacker succeeds in preventing
adding an edge to the network and, instead, causes a deletion (in a
preferential manner) of an edge. In all cases we describe the tail 
behaviour  of the degree distribution. 
\end{abstract}

\maketitle

\section{Introduction} \label {sec:prelim}
\setcounter{thm}{0}\setcounter{Corol}{0}\setcounter{lemma}{0}\setcounter{pron}{0}\setcounter{equation}{0}
\setcounter{remark}{0}\setcounter{exam}{0}\setcounter{property}{0}\setcounter{defin}{0}
Networks are all around us, and these networks are under attack. The
terms such as denial of service, hacking and ransomware have,
unfortunately, firmly entered our lives. It is, therefore, important
to understand how resilient are networks with different structure
against an attack. In this work we take a step towards that
understanding by considering a network modeled as a random graph, with a
particular structure, that of a preferential attachment type, and
investigate the resilience of the resulting network. Our choice of the
model is based on its popularity, mathematical tractability and belief
that it possesses  certain robustness properties that other types of
networks may not have, going back to \cite{albert2000error}. 

Robustness of networks modeled as graphs has been a major topic of
recent interest, with a significant survey of literature on graph vulnerability
depending on the network topology, type of the attack and robustness
measures given by \cite{freitas:yang:kumar:tong:chau:2023}. Resilience 
of preferential attachment graphs have been attracted much attention,
both heuristically and computationally, and rigorously, starting with
\cite{bollobas2004robustness} who investigated the effect of removing
a positive fraction of all nodes on the connectivity of the
graph. Later, \cite{eckhoff:morters:2014} showed that a removal of an
arbitrarily small fraction of the ``oldest'' nodes causes (among other
major changes) a complete disappearance of the power-tail distribution
of the node degree (also known as the scale-free property), pointing
to fragility of the network to a completely informed attack on the
final state of the system.

Preferential attachment models describe a growing sequence of graphs,
so a different type of attack in this model would attempt to interfere
with the growing mechanism of the system (as opposed to damaging a
part of the network after a (large) number of steps.) Once again,
heuristic and computational analysis came first. A model with a
fractional number $0<r<1$ of nodes deleted each time a new node is
added was considered In 
\cite{moore:ghoshal:newman:2006}, and the scale-free property was
found to always remain, with the power exponent in the degree
distribution an increasing function of $r$. A different type of
mechanism interference was considered in
\cite{deijfen:lindholm:2009}. Here, at each step, and attacker succeeds, 
with a certain probability, say, $1-\pi$, to prevent the network from adding an edge
and, instead, causes and edge to be deleted (with probability $\pi$
the attack fails and the networks successfully growth at that step). This paper finds a phase
transition: if the probability  of a successful attack $1-\pi<1/3$,
then the scale-free property is preserved, while if $1-\pi>1/3$,
then the scale-free property disappears (in fact, the limiting degree
distribution is found to have an exponentially fast decaying tail in
the range $1/3<1-\pi<1/2$).

The model  we are considering is, in a sense, a generalization of the
model in \cite{deijfen:lindholm:2009}, though our graphs are directed
and we eliminate a feature
in the latter model that appears to be less informative with respect
to the system robustness. Our analysis is completely rigorous, and we
confirm the the finding of a phase transition announced in
\cite{deijfen:lindholm:2009}. In fact, we find a critical curve in the
space of two parameters, one of which is the probability an attack
succeeds at a given step, and the second is the parameter governing
the strength of preferential attachment. On one side of this critical
curve the scale-free property is preserved, while on its other side
the limiting degree distribution has an exponentially fast decaying
tail (with an intermediate type of behavior in the critical regime
itself). 

This paper is organized as follows. In Section \ref{sec:2models} we
describe formally the model and discuss its features. The main results
of the paper are presented in Section \ref{sec:general}. We prove
convergence of the empirical distribution of the (in)-degree of the
vertices in the dynamic graph to the limiting distribution in Section
\ref{sec:convergence}. 

Finally, the Appendix collects a number of calculations and results
used throughout the paper.

In the sequel, unless otherwise specified, $C$ will denote a finite
positive constant whose exact value is immaterial, and which may
change from one appearance to the next.

\section{The model} \label{sec:2models}
\setcounter{thm}{0}\setcounter{Corol}{0}\setcounter{lemma}{0}\setcounter{pron}{0}\setcounter{equation}{0}
\setcounter{remark}{0}\setcounter{exam}{0}\setcounter{property}{0}\setcounter{defin}{0}

The dynamics of preferential attachment graphs incorporates the idea
of ``rich gets richer''. Such models were introduced in
\cite{barabasi:albert:1999}. This model turns out to produce
``scale-free graphs'', i.e. a sequence of graphs whose limiting degree
distribution has a 
tail with a power-type of decay, as opposed to exponential, or
faster-than-exponential, type of decay for many other random graph
models. The preferential attachment mechanism, where the
``newcomers'' are more  
likely to attach to already well-connected vertices, is seen as
supported by empirical studies such as
\cite{newman:2001b,jeong:neda:barabasi:2003}. It has been used, in
particular,  to
understand the dynamics of networks in biology, such as protein network
and bacterial surface colonization
\cite{grinberg:orevi:kashtan:2019,eisenberg:levanon:2003}. 

Since its introduction, numerous variants of the preferential
attachment model have been studied. This included varying number of
edges added to the graph at each step (see
e.g. \cite{bollobas:riordan:spencer:tusnady:2001}), including adding a random
number of edges as in
\cite{deijfen:esker:hofstad:hooghiemstra:2009}. Directed preferential
attachment graphs were also introduced
(e.g. \cite{bollobas:borgs:chayes:riordan:2003}),  and the
multivariate 
power-tail behaviour of the joint distribution of the in-degree and
out-degree was established in
\cite{samorodnitsky:resnick:towsley:davis:willis:2016}. One could also
attach an intrinsic fitness value to each node in the network; the
product of the degree of a node and its fitness then determines how
``attractive'' this node is for a future connections; see
\cite{bianconi:barabasi:2001}. One could establish 
central limit theorems for the degree counts; see e.g. \cite{resnick2016asymptotic}.

In order to see clearly the effect of an attacker interfering with the
growth mechanism of the system,  we choose a relatively simple
model of preferential attachment. The model has two parameters,
$1/2<\pi<1$ and $\delta>0$. 

We construct a  sequence of directed 
graphs $\{G(n), \, n=0,1,2,\dots\}$, with $V_n$ and $E_n$ denoting
the vertex set and the edge set of $G(n)$. Their corresponding
cardinalities are denoted by $v_n = |V_n|$ and $e_n = |E_n|$. For a
vertex $u\in V_n$ we denote by $d_n(u)$ the in-degree of $u$ in
$G(n)$.

The process is initialized   with a graph $G(0)$ consisting of two
vertices connected by a single directed edge from one vertex to the
other.  For $n \geq 0$, given the graph $G(n)$, the graph $G(n+1)$ is
constructed according to the following update rule:

\begin{itemize}
    \item with probability $\pi$ if $e_n>0$, or with probability 1 if
      $e_n=0$, a new vertex $w\notin V_n$, with a 
      directed, outgoing,       edge $(w,u)$ is introduced.  The
      second node $u$ is  either an existing vertex
      $u\in V_n$ or $u=w$, (resulting in a loop).
    The probabilities of the different choices are 
    \begin{equation} \label{e:connection.prob}
    \mathbb{P}(w \text{ connects to } u) = \begin{cases}
    \dfrac{d_n(u)+\delta}{e_n+\delta v_n +\delta} & \text{for} \ u \in
    V_n,\\
    \dfrac{\delta}{e_n+\delta v_n +\delta} & \text{for} \  u = w.
    \end{cases}
    \end{equation}
  That is, $V_{n+1} = V_n\cup \{w\}$ amd $E_{n+1}= E_n\cup
  \{(w,u)\}$. 
    \item if $e_n>0$, then with probability $1-\pi$ an existing edge
      is chosen uniformly at random among all edges in $E_n$ 
and removed from the set of edges, resulting in the new set of
edges $E_{n+1}$. In this case we set $V_{n+1}=V_n$.  
\end{itemize}

\begin{remark}
{\rm   
We have chosen a directed graph dynamics to simplify the
analysis. Analogous calculations for an indirected version of the
model would be more involved and, we believe, the overall picture
is likely to be the same.

The parameter $\pi$ is the probability that the attempt of the
attacker to interfere with preferential growth  at a given step fails. A
comparison with a simple random walk for the number of edges in the
graph shows that, if  $\pi \leq 1/2$, then 
$\mathbb{P}(e_n=0 \text{ i.o.})=1$, so eventually most vertices will
have degree 0. In the range $\pi\in (1/2,1]$  the
limiting degree distribution is non-trivial.

The parameter $\delta>0$ moderates the strength of preferential
attachment: the larger is $\delta$, the weaker the effect of
preferential attachment is.
}
\end{remark}

\begin{remark} \label{rk:edge.c}
{\rm 
We emphasize that, when the attacker succeeds in disrupting, at a given step,
preferential growth, an edge is deleted in preferential manner as
well: choosing an edge for deletion uniformly at random is equivalent
to selecting a vertex with probability proportional to its in-degree
and then deleting an edge entering this vertex uniformly at random
among all such edges. One can, therefore, view our model model as
including an informed attacker, capable of attacking the key parts of
the network. 

}
\end{remark} 
 
 In our model described above, $v_n$ and $e_n$, the numbers of
 vertices and edges in $G(n)$ are random, and so is the denominator in
 \eqref{e:connection.prob}, a feature shared by only few
 of the previously studied models of a preferential attachment
 type. This introduces an extra technical difficulty that we overcome
 in the sequel.

\section{Main results} \label{sec:general}
\setcounter{thm}{0}\setcounter{Corol}{0}\setcounter{lemma}{0}\setcounter{pron}{0}\setcounter{equation}{0}
\setcounter{remark}{0}\setcounter{exam}{0}\setcounter{property}{0}\setcounter{defin}{0}


For $n\geq 0$ we denote by $N_k(n)$ the number of vertices with
in-degree $k$ at time $n$, i.e. 
$$
N_k(n) = \sum_{v\in V_n} \mathbbm{1}_{\{d_n(v)=k\}}, \ k=0,1,2,\ldots,
$$
so that
$$
p_k(n) = N_k(n)/v_n, \ k=0,1,2,\ldots,
$$
is the p.m.f. of the empirical distribution of the in-degrees of the
vertices of the graph $G(n)$. As in the many other models of the
preferential attachment type, it turns out that, with probability 1,
this empirical distribution converges weakly to a non-random limit. 
Once we establish this fact, we will turn to our main goal, that of
understanding whether the tail of the limiting distribution has a
power-like decay, or an exponential-like decay. In other words, does
the network still has the scale-free property it would have if not for
the interfering attacker? It turns out that both situations are
possible, depending on the skill of the attacker, as measured by the
probability of successful interference $1-\pi$, and on the moderating
effect on the strength of the preferential attachment induced by the
parameter $\delta$. That is, a phase transition occurs at the boundary
between these two regimes, and we will describe that  boundary.


We start with establishing existence of a non-random limiting degree
distribution. The proof of the next theorem is given in Section
\ref{sec:convergence}.  In the statement of the theorem we will use
the following notation. 
\begin{align} \label{e:all.coeff}
\alpha_0 &= \frac{\pi}{2\pi-1 + \delta\pi},
& \beta_0 &= \frac{\delta\pi}{2\pi-1 + \delta\pi}, \\[0.4em]\notag 
\alpha_1 &= -\frac{\pi}{2\pi-1 + \delta\pi} - \frac{1-\pi}{2\pi-1},
& \beta_1 &= -\frac{\delta\pi}{2\pi-1 + \delta\pi} - 1,
  \\[0.4em] \notag 
\alpha_2 &= \frac{1-\pi}{2\pi-1},
& \beta_2 &= 0.
\end{align}

\begin{thm}\label{thm:convergence+concentration}
    For each fixed $k=0,1,2,\ldots$, the limit
    $        p_k = \lim_{n\to\infty} \mathbb{E} [p_k(n)]    $
    exists, and $(p_k, \, k=0,1,2,\ldots)$ is a p.m.f. on the
    nonnegative integers. In fact, there are positive $\kappa, C$
    such that 
    \begin{equation} \label{e:total.var}
    \sum_{k=0}^{\infty}\left|p_k-\mathbb{E} [p_k(n)]\right| \leq Cn^{-\kappa}
    \end{equation}
    for all $n$. Moreover, for any $\epsilon > 0$, there exist $C>0$
    and  $N$ such that for all $n \ge N$ and all $k$,
    \begin{equation} \label{e:nonrand.conv}
      \mathbb{P}\left( \left|p_k(n)-p_k\right| > \epsilon\right) \leq
      Ce^{-C^{-1}\min(\epsilon,\epsilon^2)n/(\log n)^{2}}. 
    \end{equation}
The limiting probabilities $(p_k, \, k=0,1,2,\ldots)$ satisfy the
recursion 
  \begin{equation} \label{eqn:rr}
    [\alpha_2(k+1)+\beta_2]p_{k+1}+[\alpha_1k+\beta_1]p_{k}+[\alpha_0 (k-1)+\beta_0]p_{k-1} = -\mathbbm{1}_{k=0},
\end{equation}
$k=0,1,\ldots$, with $p_{-1}=0$. 
\end{thm}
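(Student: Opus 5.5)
The plan has four steps: a mean recursion for $\mathbb{E}N_k(n)$ with the random denominators frozen at their deterministic asymptotic values, a perturbation (stability) argument that turns this into the bound \eqref{e:total.var} and identifies the limit through \eqref{eqn:rr}, concentration via a martingale, and finally a check that the limit has mass one.

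\emph{Step 1: concentration of the global quantities.} Since $e_n$ is a random walk with drift $2\pi-1>0$ and $v_n$ increases by one with probability $\pi$ (when $e_n>0$), Azuma/Hoeffding bounds give, with probability at least $1-Ce^{-c n^{1-2\gamma}}$ for small $\gamma>0$,
\[
e_n=(2\pi-1)n+O(n^{1/2+\gamma}),\qquad v_n=\pi n+O(n^{1/2+\gamma}).
\]
On this event the attachment denominator is $(2\pi-1+\delta\pi)n(1+O(n^{-1/2+\gamma}))$ and the deletion denominator is $e_n=(2\pi-1)n(1+O(n^{-1/2+\gamma}))$. The probability that $e_m=0$ for some $m\ge n^{\gamma}$ is also exponentially small.

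\emph{Step 2: the one-step expected change of $N_k$.} Conditionally on $G(n)$ we have:
\begin{itemize}
\item an attachment step raises one vertex of degree $k-1$ to degree $k$ with probability $\pi(k-1+\delta)N_{k-1}/D_n$, where $D_n=e_n+\delta v_n+\delta$;
\item the new vertex contributes to $N_0$ (or to $N_1$ if it forms a loop, a term of order $1/n$);
\item a deletion step takes a degree-$(k+1)$ vertex down to $k$ with probability $(1-\pi)(k+1)N_{k+1}/e_n$.
\end{itemize}
Substituting the deterministic asymptotics of the denominators and writing $\mathbb{E}N_k(n)\approx \pi n\,p_k$, the stationarity condition $\pi p_k=\mathbb{E}[N_k(n+1)-N_k(n)]$ becomes, after dividing by $\pi$, exactly \eqref{eqn:rr}:
\begin{itemize}
\item the coefficient $\alpha_0(k-1)+\beta_0$ comes from $(k-1+\delta)/(2\pi-1+\delta\pi)$;
\item $\alpha_2(k+1)$ comes from $(1-\pi)(k+1)/(\pi(2\pi-1))$;
\item the diagonal term $\alpha_1 k+\beta_1$ collects the outflow plus the $-1$ coming from the growth of $v_n$;
\item the indicator $\mathbbm{1}_{k=0}$ is the newborn vertex.
\end{itemize}
So I will write $\mathbb{E}N_k(n+1)=\mathbb{E}N_k(n)+\pi\mathbbm{1}_{k=0}+(L\,\mathbb{E}N_\cdot(n)/n)_k+R_k(n)$, where $L$ is the tridiagonal operator above. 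The error $\sum_k|R_k(n)|\le Cn^{-1/2+\gamma}$ follows from Step 1 together with $\sum_k kN_k\le e_n$ and the exponentially small bad event.

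\emph{Step 3: existence of $p$ and the rate \eqref{e:total.var}.} This is the main obstacle. The recursion \eqref{eqn:rr} is a genuine second-order (three-term) relation rather than the usual triangular one, because deletions move mass downward. Solving it forward from $p_0$ therefore needs a boundary condition at infinity: $p$ must be the unique summable, nonnegative solution.

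My first attempt is to view $L$ as the generator of a birth–death chain on $\{0,1,2,\dots\}$ killed at rate one (the $-1$ in $\beta_1$), with births at rate $\alpha_0 k+\beta_0$ and deaths at rate $\alpha_2 k$. One checks that the diagonal equals minus the sum of the column rates minus one, so $L=Q^{T}-I$ for a conservative $Q$ (care is needed with the column bookkeeping). Then $p=\pi$-weighted $\int_0^\infty e^{-t}\,(\delta_0 e^{tQ})\,dt$, i.e. $p_k=\int_0^\infty e^{-t}\,\mathbb{P}_0(X_t=k)\,dt$ for a linear birth–death process $X$. This is non-explosive because the rates are linear. Hence $p$ exists, is a probability mass function, and satisfies \eqref{eqn:rr}.

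For the rate, set $q_k(n)=\mathbb{E}N_k(n)/(\pi n)$ and $\Delta(n)=q(n)-p$. From Step 2,
\[
\Delta(n+1)=\Bigl(1-\tfrac1{n+1}\Bigr)\Delta(n)+\tfrac1{n+1}(Q^{T}\Delta(n))+O(n^{-3/2+\gamma})\ \text{in } \ell^1.
\]
Because $e^{tQ^{T}}$ is an $\ell^1$-contraction, comparing with the continuous-time flow $\frac{d}{ds}\Delta=(Q^{T}-I)\Delta$ in the time variable $s=\log n$ gives decay $e^{-s}$ plus the accumulated error. The delicate point is that $Q^{T}$ is unbounded on $\ell^1$ (its rates grow in $k$). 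I would handle this by working in the weighted norm $\sum_k(1+k)|\cdot|$ truncated at degree $K_n=n^{\gamma}$, and by bounding tails using $\sum_k kN_k(n)\le e_n$. This should yield $\sum_k|\Delta_k(n)|\le Cn^{-\kappa}$. Replacing $v_n$ by $\pi n$ costs only $O(n^{-1/2+\gamma})$.

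\emph{Step 4: concentration \eqref{e:nonrand.conv}.} I will use the Doob martingale $M_j=\mathbb{E}[N_k(n)\mid G(j)]$. A single step changes at most two degrees by one. The differences $|M_{j}-M_{j-1}|$ can nevertheless be large through the random denominators; to control this I couple two copies of the process differing in one step and show the discrepancy in $N_k$ stays $O(\log n)$ with overwhelming probability. Azuma then gives $\mathbb{P}(|N_k(n)-\mathbb{E}N_k(n)|>\epsilon n)\le Ce^{-c\epsilon^2 n/(\log n)^2}$. Combining this with Step 1 for $v_n$ and with \eqref{e:total.var} yields the claim; the $\min(\epsilon,\epsilon^2)$ form comes from a Freedman-type bound in place of Azuma.
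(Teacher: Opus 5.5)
Your overall architecture matches the paper's: a mean recursion with frozen denominators, an $\ell^1$ stability argument with truncation, then a Doob martingale whose increments are $O(\log n)$ by coupling, and Azuma. Your identification $p_k=\int_0^\infty e^{-t}\,\mathbb{P}_0(X_t=k)\,dt$, for the linear birth--death chain with birth rate $\alpha_0(k+\delta)$ and death rate $\alpha_2k$, is a correct and more transparent alternative to the paper's generating-function construction. It gives $p_k\ge0$ and $\sum_kp_k=1$ directly, and also moment bounds on $p$, since $\alpha_0-\alpha_2<1$ for $\delta>0$. (Your Step 2 coefficients are off by a factor $\pi$: the recursion needs $\pi(k-1+\delta)/(2\pi-1+\delta\pi)$ and $(1-\pi)(k+1)/(2\pi-1)$.)

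The genuine gap is in Step 3, exactly where you write ``this should yield''. After truncating at $K_n$, the matrix $I+\frac1{n+1}(Q^T-I)$ is nonnegative with column sums at most $1-\frac1{n+1}$. But at every step the tail injects the flux $\frac{\alpha_2K_n}{n+1}\Delta_{K_n}(n)$ into the head, plus the whole coordinate $\Delta_{K_n}(n)$ whenever $K$ increments. The only a priori input you cite, $\sum_kkN_k(n)\le e_n$, gives merely $\mathbb{E}N_K(n)/n\le C/K$. So the flux is of order $C/n$ per step, and under the contraction $\prod_m(1-\frac1m)$ it accumulates to $n^{-1}\sum_{d\le n}d\cdot Cd^{-1}=O(1)$, which gives no decay.

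The weighted norm $\sum_k(1+k)|\cdot|$ makes this worse. The contraction factor degrades to $1-\eta/(n+1)$ with $\eta=1-\max(\beta_0,\alpha_0-\alpha_2)$, while the injected flux picks up an extra factor $K_n$, so the bound grows like $n^{\gamma}$. Nothing about the deterministic choice $K_n=n^\gamma$ makes $\mathbb{E}N_{K_n}(n)$ small.

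What is missing is an ingredient that forces the boundary coordinate to be small. The paper truncates at $K(n)\asymp c_0n$. At that scale the tail is $O(1/n)$ from the first moment alone, and the choice of $c_0$ keeps the truncated matrix nonnegative. It then chooses the slopes of a piecewise-linear $K(n)$ on the blocks $[3^{k-1}n_0,3^kn_0]$ by averaging over $c\in[5c_0/8,c_0]$. This makes $\mathbb{E}N_{K(n+1)}(n)\le n^{-0.1}$ except at $O(n^{0.2})$ ``bad'' times per block. Good times inject only $O(n^{-1-\kappa'})$ each, for some $\kappa'>0$, and the $O(n^{-1})$ injections at bad times add up to $O(n^{-0.8})$.

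Alternatively, you could rescue your own scheme with an a priori bound $\sum_kk^{1+\theta}\,\mathbb{E}N_k(n)\le Cn$ for small $\theta>0$. This can be obtained from a drift computation for $\sum_v d_n(v)^{1+\theta}$, using Step 1 to freeze the denominators, because $(1+\theta)(\alpha_0-\alpha_2)<1$. It gives $K\,\mathbb{E}N_K(n)/n\le CK^{-\theta}$ and hence a polynomially decaying flux. Either way, this is the substance of \eqref{e:total.var} and has to be supplied.

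Step 4 follows the paper in outline, but the hard coupling case is not worked out. There one copy adds a vertex and an edge while the other deletes an edge, so the counts differ and synchronization breaks when a copy has no edges. That is where the $O(1/(i-j))$ per-step mismatch rate must actually be proved; an expectation bound suffices. No Freedman-type bound is needed: since $|p_k(n)-p_k|\le1$, only $\epsilon<1$ matters, and there $\min(\epsilon,\epsilon^2)=\epsilon^2$.
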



\begin{remark}
  {\rm 
Observe that once \eqref{e:total.var} has been established, the fact
that $(p_k, \, k=0,1,2,\ldots)$ is a true probability measure
follows. Furthermore, it will follow from \eqref{e:nonrand.conv} and
the first Borel-Cantelli lemma that $p_k(n)\to p_k$ a.s for every $k$
and, hence, on an event of probability 1, the sequence $(p_k(n), \,
k=0,1,2,\ldots)$ of the
empirical in-degree distributions of the vertices in $G(n), \, n\geq
0$ converges weakly to the probability measure given by  $(p_k, \,
k=0,1,2,\ldots)$. 
  }
\end{remark}
\begin{remark}
  {\rm
While the statements in Theorem \ref{thm:convergence+concentration}
are analogous to the corresponding statements for the models without
an attack that have appeared in a number of prior publications, the
arguments required to establish these results when the dynamics
incorporates an attacker, are significantly more involved. In
particular, to establish the concentration result
\eqref{e:nonrand.conv} we apply a martingale approach
used in  \cite{bollobas:riordan:spencer:tusnady:2001}. In our
situation, however, the martingale approach requires a non-trivial
coupling construction unnecessary in the previous cases. Our approach
will also yield the following concentration bound for the in-degree
counts, For some $C>0$, as $n\to\infty$, 
\begin{equation}\label{prop:concentration_count}
    \mathbb{P}\left(\max_k \left|N_k(n)-\mathbb{E}[N_k(n)]\right| \geq
      C\sqrt{n}(\log n)^{3/2}\right) = o(1). 
 \end{equation}
  }
\end{remark} 

We now turn to the main result of this paper, describing the tail
behaviour of the limiting probabilities $(p_k)$ and the phase
transition observed when the attack is severe enough to destroy the
scale-free property.  Theorem
\ref{thm:tails} below shows that the critical boundary in the space of
the model parameters $(\pi,\delta)$ is that of $\alpha_0=\alpha_2$,
where  $\alpha_0$ and $\alpha_2$ are as in \eqref{e:all.coeff}.
Specifically, we will see that, as $k$ increases, 
\begin{itemize}
    \item if $\alpha_0 > \alpha_2$, then $p_k$ decays in a
      power-like manner, so the scale-free property remains;
   \item if $\alpha_2>\alpha_0$, then $p_k$ decays in an
      exponential-like manner, so the scale-free property is destroyed.
    \end{itemize}
On the critical boundary $\alpha_0=\alpha_2$ itself an intermediate type of
decay of $p_k$ is observed. 

The following notation will be important in the sequel.  If
$\alpha_0\not=\alpha_2$ we set 
\begin{equation} \label{e:AB}
B = \frac{1}{\alpha_0 - \alpha_2}, \ \ 
A = -(\delta + B), 
\end{equation}
and we use the notation $\pi_*=3.14...$ to distinguish it from a model
parameter $\pi$. 
\begin{thm}\label{thm:tails}
As $k\to\infty$, 
\begin{align} \label{e:pk.asymp}
    p_k \sim \begin{cases}
        c_{\pi,\delta}(\alpha_0/\alpha_2)^{k} \cdot k^{-(A+1)} &
        \text{if } \ \alpha_0<\alpha_2,\\
        c_{\pi,\delta} e^{-2\alpha_0^{-1/2}k^{1/2}}\cdot k^{\delta/2-3/4} & \text{if
        } \ \alpha_0=\alpha_2,\\
        c_{\pi,\delta} k^{-(B+1)} & \text{if } \ \alpha_0>\alpha_2, 
    \end{cases}
\end{align}
with
\begin{align*}
  c_{\pi,\delta} = \begin{cases}
    (\alpha_2-\alpha_0)^{-(A+1)}\alpha_2^A\Gamma(-B)/\Gamma(\delta) & \text{if
} \ \alpha_0<\alpha_2, \\
e^{1/(2\alpha_0)}\alpha_0^{-(\delta/2+1/4)}\sqrt{\pi_*}/\Gamma(\delta) &  \text{if
        } \ \alpha_0=\alpha_2,\\
(\alpha_0-\alpha_2)^{-(B+1)}\alpha_0^B\Gamma(-A)/\Gamma(\delta) & \text{if
} \ \alpha_0>\alpha_2. 
\end{cases}
\end{align*}

\end{thm}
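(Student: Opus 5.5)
The plan is to convert the three–term recursion \eqref{eqn:rr} into a first order linear ODE for the generating function $P(z)=\sum_{k\ge 0}p_kz^k$, which Theorem \ref{thm:convergence+concentration} shows is analytic in $|z|<1$ with $P(1)=1$. I will then solve this ODE explicitly and read off the asymptotics of $p_k$ from the singularity structure of $P$, equivalently from an integral representation of $p_k$.

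\textbf{Step 1: the ODE.} Multiply \eqref{eqn:rr} by $z^k$ and sum over $k\ge0$. This gives
\begin{equation*}
(\alpha_2+\alpha_1z+\alpha_0z^2)P'(z)+(\beta_1+\beta_0z)P(z)=-1 .
\end{equation*}
From \eqref{e:all.coeff} we have $\alpha_0+\alpha_1+\alpha_2=0$ and $\beta_0+\beta_1=-1$. The quadratic therefore factors as $(1-z)(\alpha_2-\alpha_0 z)$, with roots $z=1$ and $z=\alpha_2/\alpha_0$. Dividing through and using partial fractions, the integrating factor has the form $(1-z)^{a}(\alpha_2-\alpha_0z)^{b}$ when $\alpha_0\ne\alpha_2$. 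The exponents $a,b$ are computed from $\beta_0,\beta_1$ and turn out to be expressible through $A,B$ of \eqref{e:AB}; for instance the exponent at $z=1$ is $-(\beta_0+\beta_1)/(\alpha_0-\alpha_2)=B$ up to sign conventions. The general solution is then an explicit integral plus a multiple of the homogeneous solution.

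\textbf{Step 2: fixing the solution.} The recursion determines $(p_k)$ from $p_0$, since $\alpha_2\neq0$. So it suffices to identify the unique solution that is analytic in the unit disc and takes the value $1$ at $z=1$. In the case $\alpha_0>\alpha_2$ the root $\alpha_2/\alpha_0$ lies inside the disc. Analyticity of $P$ there forces the constant of integration, so that the integral is taken from $\alpha_2/\alpha_0$, and $P$ is regular at that point. After the substitution $z\mapsto$ (integration variable), I expect $p_k$ to become a beta-type mixture of negative binomial probabilities, of the form
\begin{equation*}
p_k=\frac{1}{\Gamma(\delta)}\int w(t)\,\frac{\Gamma(k+\delta)}{k!}\,t^{k}(1-t)^{\delta}\,dt .
\end{equation*}
This would explain the factor $\Gamma(\delta)$ in the constants, and $\Gamma(-A)$ or $\Gamma(-B)$ would come from the total mass of the weight $w$.

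\textbf{Step 3: asymptotics.} With the representation in hand I apply Laplace's method, or equivalently Darboux / Flajolet–Odlyzko transfer theorems on $P$.
\begin{itemize}
\item If $\alpha_0>\alpha_2$, the dominant singularity is at $z=1$ with an algebraic singularity of exponent $B$ (non-integer generically; otherwise a log term needs separate care). This yields $p_k\sim c\,k^{-(B+1)}$.
\item If $\alpha_0<\alpha_2$, the point $z=1$ is a regular point, because $P$ is a pgf with enough moments. The nearest singularity is then $z=\alpha_2/\alpha_0>1$, with exponent $A$. This gives $(\alpha_0/\alpha_2)^k k^{-(A+1)}$.
\item If $\alpha_0=\alpha_2$, the two roots merge into the double root $(1-z)^2$. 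The integrating factor then contains $\exp\bigl(c/(1-z)\bigr)$, an irregular singularity. I will do a saddle point analysis of the Cauchy integral (or of the $t$-integral) with the phase $k\log t-\alpha_0^{-1}/(1-t)$, whose saddle is at $1-t\approx(\alpha_0k)^{-1/2}$. This produces $e^{-2\alpha_0^{-1/2}k^{1/2}}$, with the polynomial correction $k^{\delta/2-3/4}$ and the factor $\sqrt{\pi_*}$ coming from the Gaussian integral.
\end{itemize}
The constants $c_{\pi,\delta}$ are then obtained by evaluating the leading coefficient of the singular expansion.

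The main obstacle I expect is Step 2 combined with the critical case. One must justify that the chosen solution really equals the pgf, which requires using summability and uniqueness of the analytic solution. In the critical case, one must also control the saddle point uniformly enough to get the exact constant $e^{1/(2\alpha_0)}\alpha_0^{-(\delta/2+1/4)}\sqrt{\pi_*}/\Gamma(\delta)$. Here I would first try writing $p_k$ as a real integral over $t\in(0,1)$ and applying a standard Laplace expansion around the moving maximizer, since that avoids contour deformation around an essential singularity.
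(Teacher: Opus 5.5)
Your Steps 1--2 are sound and follow the paper's route; its closing remark in Section~5 describes exactly this derivation. The quadratic factors as $(1-z)(\alpha_2-\alpha_0z)$, and the homogeneous solution is $(1-z)^B(\alpha_2-\alpha_0z)^A$, or $(1-z)^{-\delta}e^{1/(\alpha_0(1-z))}$ when $\alpha_0=\alpha_2$. The free constant is fixed by boundedness of $P$ on $[0,1)$ when $\alpha_0\le\alpha_2$, and by regularity at $\alpha_2/\alpha_0$ when $\alpha_0>\alpha_2$. Identifying the limit through this uniqueness argument is a legitimate alternative to the paper, which instead feeds its explicit solution into the convergence proof of Theorem~\ref{thm:convergence+concentration}. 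Your critical-case plan (a real negative-binomial mixture integral plus Laplace's method at the moving maximiser) is also the paper's.

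\textbf{The gap is in Step~3, off the critical curve.} The constants $c_{\pi,\delta}$ are part of the statement, and in general they cannot simply be ``read off'' the singular expansion. If $\alpha_0>\alpha_2$ then $B>0$ always. So in $\int_{\alpha_2/\alpha_0}^z(\alpha_0y-\alpha_2)^{-(A+1)}(1-y)^{-(B+1)}\,dy$ the integrand is not integrable at $y=1$, and the coefficient of $(1-z)^B$ is a regularised (Hadamard finite-part) integral. The same happens at $z=\alpha_2/\alpha_0$ when $\alpha_0<\alpha_2$ and $A\ge0$. Evaluating that finite part in closed form is the real work in the paper: subtract the Taylor polynomial, identify the constant as $d(A,B)=\sum_j c_j(B)/(j-A)$, and use $d(A+1,B)=\frac{A+B+1}{A+1}d(A,B)$ to reduce to a Beta integral. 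Integer exponents produce logarithmic terms and need a separate computation. Your proposal contains no mechanism for either.

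Two further remarks. First, your reason why $z=1$ is regular when $\alpha_0<\alpha_2$ (``a pgf with enough moments'') is not valid. On the critical curve all moments are finite, yet $z=1$ is singular because $p_k$ decays subexponentially. Regularity comes from the explicit formula: for $B<0$, $(1-z)^B\int_z^1(1-y)^{-(B+1)}(\alpha_2-\alpha_0y)^{-(A+1)}\,dy$ expands in integer powers of $1-z$.

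Second, the mixture you only ``expect'' is in fact true in all three regimes, and it would close the gap more cleanly than the paper does. Put $r=\alpha_0-\alpha_2$ and $p_\tau=r/(\alpha_0e^{r\tau}-\alpha_2)$, read as $1/(1+\alpha_0\tau)$ when $r=0$. Then $\int_0^\infty e^{-\tau}\bigl(p_\tau/(1-(1-p_\tau)z)\bigr)^\delta\,d\tau$ is bounded by $1$ on the unit disc. It solves your ODE: integrate against $e^{-\tau}$ the Kolmogorov equation of the birth--death process with rates $\alpha_0(d+\delta)$ up and $\alpha_2 d$ down. By your uniqueness step it therefore equals $P$.

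The Gamma factors then come from the \emph{endpoint behaviour} of the mixing law, not from its total mass, which is $1$. For $\alpha_0>\alpha_2$ the substitution $s=p_\tau$ gives
\[
p_k=\frac{\Gamma(k+\delta)}{k!\,\Gamma(\delta)}\int_0^1\Bigl(\frac{\alpha_0s}{r+\alpha_2s}\Bigr)^{B}\frac{s^{\delta-1}(1-s)^k}{r+\alpha_2s}\,ds .
\]
Watson's lemma at $s=0$ then gives $p_k\sim\alpha_0^B r^{-(B+1)}\Gamma(\delta+B)\Gamma(\delta)^{-1}k^{-(B+1)}$. Since $-A=\delta+B$, this is exactly the stated constant, with no integer exceptions. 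The case $\alpha_0<\alpha_2$ is analogous at the endpoint $1-p_\tau\uparrow\alpha_0/\alpha_2$. As written, however, this is only a conjecture in your proposal, so Step~3 does not yet establish the constants.
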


\begin{remark}
  {\rm
    Figure~\ref{fig:partition_delta_pi} presents the phase transition
    described in Theorem \ref{thm:tails} in the space of the model
    parameters $(\pi,\delta)$. We see that the scale-free property is
    preserved when $\pi$ is large and $\delta$ is small. More
    specifically, for any $\delta>0$ there is a critical value
    $\pi_*(\delta)\in (1/2,1)$ such that the scale-free property is
    preserved in the range $\pi \in (\pi_*(\delta),1)$ and disappears
    in the range $\pi \in (1/2,\pi_*(\delta))$. On the other hand,
    for any $\pi\in (1/2,1)$ there is a critical value $\delta_*(\pi)$
    such that the scale-free property is
    preserved in the range $0<\delta<\delta_*(\pi)$ and disappears in
    the range $\delta>\delta_*(\pi)$. If the attacker succeeds only
    rarely ($\pi$ close to 1), then the scale-free property is
    preserved almost no matter what $\delta$ is, and when of the
    preferential attachment is very strong ($\delta$ close to 0), then
    the scale-free property is 
    preserved almost no matter what $\pi$ is.

  When $\pi=1$ (no attacker present), we have
  $\alpha_0=1/(1+\delta)>0=\alpha_2$, and $B=1+\delta$. In this case
  the statement of Theorem \ref{thm:tails} coincides with the
  classical result on the tail of the tail distribution in the
  preferential attachment model; see Section 8 in
  \cite{vanderHofstad:2016} (with $m=1$ and $\delta$ replaced by
  $\delta+1$ to account for the constant out-degree of 1 not used in
  our directed model).

\begin{figure}[H]
  \centering
  \includegraphics[scale=0.5]{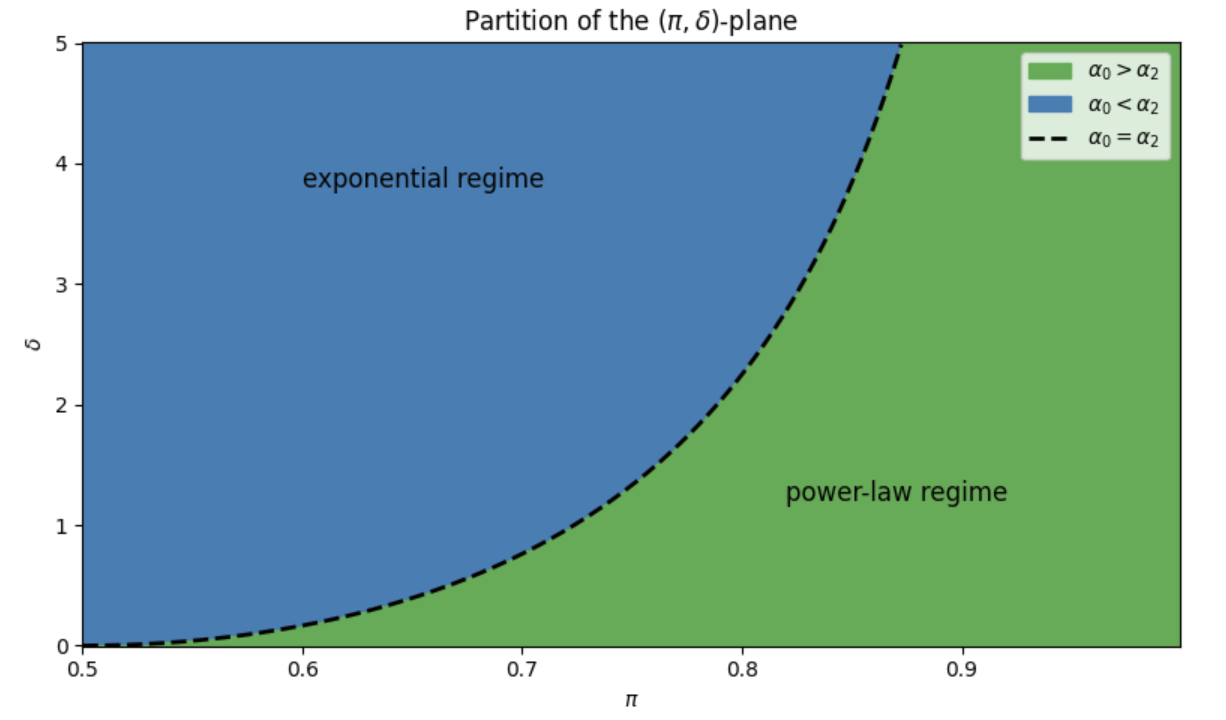}
  \caption{The phase transition between presence and absence of the
    scale-free property in the $(\pi,\delta)$-plane}
  \label{fig:partition_delta_pi}
\end{figure}

}
\end{remark}

\section{Convergence and Concentration} \label{sec:convergence}
\setcounter{thm}{0}\setcounter{Corol}{0}\setcounter{lemma}{0}\setcounter{pron}{0}\setcounter{equation}{0}
\setcounter{remark}{0}\setcounter{exam}{0}\setcounter{property}{0}\setcounter{defin}{0}

In this section, we prove the convergence and concentration results in
Theorem~\ref{thm:convergence+concentration}.  We start with deriving a
recursion relating the expected vertex count
$\mathbb{E}[N_k(n+1)]$ to certain expected vertex counts on the
previous step. We then couple the resulting recursive relation with a
different recursion, which is satisfied by the claimed limiting probabilities.
By conditioning on the graph $G(n)$, we obtain
\begin{equation*}
\begin{aligned}
    \mathbb{E}\bigl[N_k(n+1)-N_k(n) \mid G(n)\bigr] &= \mathbbm{1}_{\{ e_n > 0 \}} \cdot\bigl(\pi C^{(1)}_{k,n} + (1-\pi) C^{(2)}_{k,n}\bigr) + \mathbbm{1}_{\{ e_n = 0 \}} \cdot  C^{(1)}_{k,n},
 \end{aligned}
\end{equation*}
where 
$$C^{(1)}_{k,n} = \frac{(k-1+\delta)N_{k-1}(n)}{e_n+\delta v_n+\delta}-\frac{(k+\delta)N_{k}(n)}{e_n+\delta v_n+\delta} + \frac{e_n+\delta v_n}{e_n+\delta v_n+\delta} \mathbbm{1}_{k=0} + \frac{\delta}{e_n+\delta v_n+\delta}\mathbbm{1}_{k=1},$$
$$C^{(2)}_{k,n} = \frac{(k+1)N_{k+1}(n)}{e_n} -\frac{kN_{k}(n)}{e_n}.$$
Taking the expectation on both sides and using the bounds obtained
in Section \ref{sec:approx} of the appendix that allow us to replace the random
quantities $e_n$ and $v_n$ by their linear approximations, we obtain 
\begin{align} 
 \notag \mathbb{E}[N_k(n+1)] - \mathbb{E}[N_k(n)] &=\mathbb{E}\bigl[
\bigl( \pi \cdot  \mathbbm{1}_{\{ e_n > 0 \}}
      + \mathbbm{1}_{\{ e_n = 0 \}}\bigr)   C^{(1)}_{k,n} \bigr] + (1-\pi) \mathbb{E}\bigl[\mathbbm{1}_{\{ e_n > 0 \}}C^{(2)}_{k,n}\bigr] \\
\label{eqn:rr_approximation}    &= \pi \cdot \left[\frac{(k-1+\delta)\mathbb{E}[N_{k-1}(n)]}{(2\pi-1) n+\delta \pi n}-\frac{(k+\delta)\mathbb{E}[N_{k}(n)]}{(2\pi-1) n+\delta \pi n} + \mathbbm{1}_{k=0} \right]\\
 \notag   &+ (1-\pi) \cdot \left[
            \frac{(k+1)\mathbb{E}[N_{k+1}(n)]}{(2\pi-1) n}
            -\frac{k\mathbb{E}[N_{k}(n)]}{(2\pi-1) n} \right] +  R_{n,k},
\end{align}
where we set $\mathbb{E}[N_{-1}(n)]=0$ and 
$R_{n,k}$ is the error resulting from the approximations. Recall
that
\begin{equation} \label{e:R.L1}
\sum_{k=0}^\infty |R_{n,k}|= O(n^{-\epsilon}) \ \text{for any} \
0<\epsilon<1/2\,.
\end{equation}

Proposition \ref{pr:heart} below 
shows that the recursion \eqref{eqn:rr} has a solution $(p_k, \,
k=0,1,2,\ldots)$ that satisfies 
\begin{equation} \label{e:bound.pk}
|p_k|\leq Ck^{-1-\gamma}, \ k=1,2,\ldots
\end{equation}
for some $\gamma, C>0$. We fix one such solution and consider ``the
error'' vector
\begin{equation} \label{e:en}
e_n = \bigl( e_{n,0}, e_{n,1}, \ldots\bigr) \ \ \text{with} \ \ e_{n,k} =
\mathbb{E}[N_k(n)]/n-\pi p_k, \ k=0,1,2,\ldots.
\end{equation}
We will show that there are positive $\kappa, C$
    such that 
\begin{equation} \label{e:bound.en}
\|e_n\|_1 \leq Cn^{-\kappa}.
    \end{equation}
This, of course, will imply that $\mathbb{E} [p_k(n) ] \to p_k$ for each
$k$ as well as establish \eqref{e:total.var} (with a different $C$).

We start by noticing that the recursion \eqref{eqn:rr} can be rewritten in the form    
$$
    p_k = \pi \cdot \left[\frac{(k-1+\delta)p_{k-1}}{(2\pi-1) +\delta \pi }-\frac{(k+\delta)p_k}{(2\pi-1) +\delta \pi }\right]+ (1-\pi) \cdot \left[ \frac{(k+1)p_{k+1}}{2\pi-1 } -\frac{kp_k}{2\pi-1} \right] + \mathbbm{1}_{k=0},
$$
with $p_{-1}=0$. Comparing this with the recursion
\eqref{eqn:rr_approximation} one sees that the vectors $(e_n)$ satisfy
\begin{align}\label{eqn:ee}
    e_{n+1} &= \left(I+\frac{1}{n+1}(L-I)\right)e_n + r_n
\end{align}
where $I:\, \bbr^\infty\to\bbr^\infty$ is the identity map, 
$L$ is an infinite matrix with rows and columns numbered
$0,1,2,\ldots$, with 3 non-zero entries in each row (except the zeroth
row which has 2 non-zero entries), such that for each infinite vector
$y=(y_0,y_1,y_2,\ldots)$ 
$$
(Ly)_k = \frac{\pi}{(2\pi-1)+\delta \pi}
\bigl((k-1+\delta)y_{k-1}-(k+\delta)y_k\bigr) +
\frac{1-\pi}{2\pi-1}\bigl((k+1)y_{k+1}-ky_k\bigr), \ k=0,1,2,\ldots
$$
and the term $y_{-1}$ absent when $k=0$. Furthermore, $r_n$ is an
infinite vector with entries $r_{n,k} = R_{n,k}/(n+1)$ for all $k$. It
follows from \eqref{e:R.L1} that
\begin{equation} \label{e:R.L2}
\|r_n\|_1= O\bigl(n^{-(1+\epsilon)}\bigr) \ \text{for any} \
0<\epsilon<1/2\,.
\end{equation}

Observe that $L$ is an infinite tridiagonal matrix. The entries on its main diagonal 
are negative, while the entries on the two adjacent diagonals are
positive. Moreover, all columns of $L$ have zero column sum, and if
$L$ is truncated to a finite square matrix consisting of the first
rows and the first columns of $L$, then the column sums of the
truncated matrix are nonpositive. 

Let $P_m:\, \bbr^\infty\to\bbr^\infty$ be the projection of a vector in $\bbr^\infty$ 
onto its first $m$ coordinates. For a positive integer $K(n)$ to be
specified momentarily we decompose the vector $e_n$ in
\eqref{e:en} as 

\begin{equation} \label{e:decompose.e}
u_n = P_{K(n)}e_n,\qquad v_n = e_n-u_n,
\end{equation}
so that $e_n=u_n+v_n$. We choose $K(n)$ as follows. Let  
$$
c_0 = \left(\frac{\pi}{(2\pi-1)+\delta
    \pi}+\frac{1-\pi}{2\pi-1} +1\right)^{-1}
$$
and choose $c'\in [5c_0/8,c_0]$. Let $n_0$ be a large integer.  For a carefully
chosen sequence $c_i\in [5c_0/8,c_0], \, i=1,2,\ldots$ we construct an
integer-valued function $K$ on $\{ n_0,n_0+1,\ldots\}$  by setting
$K(n_0) = \lfloor c'n_0\rfloor$ and 
\begin{equation} \label{e:Kn}
K(n) = \begin{cases}
    K(n_0) + \lfloor c_1 (n-n_0)\rfloor & n\in[n_0,3n_0]\\
    K(3n_0)+ \lfloor c_2 (n-3n_0)\rfloor & n\in [3n_0,9n_0]\\
    \dots\\
    K(3^{k-1}n_0)+\lfloor c_k (n-3^{k-1}n_0)\rfloor & n\in [3^{k-1}n_0,3^kn_0]\\
    \dots
  \end{cases}
  .
\end{equation}
For large enough $n_0$ this ensures that $(3c_0/5)(3^kn_0)\leq
K(3^kn_0)\leq c_0(3^kn_0)$ for all $k\geq 0$ and 
$c_0n/2\leq K(n)\leq c_0n$ for all $n\geq n_0$. Since $c_i<1$ for all
$i$, $K(n)\leq
K(n+1)\leq K(n)+1$ for all $n\geq n_0$. 

We claim that we can choose the sequence $(c_i)$ in such a way that
for some $C>0$, for every  $k=1,2,\dots$, the following condition is
satisfied: 
\begin{align} \label{e:good.K}
&\Big|n\in[3^{k-1}n_0,3^kn_0]: \mathbb{E}[N_{K(n+1)}(n)]>n^{-0.1} \\
\notag &\hskip 1.1in \text{ or } \mathbb{E}[N_{K(n+1)+1}(n)]>n^{-0.1} \}\Big|\leq C\cdot
(3^{k-1}n_0)^{0.2}.
\end{align}
We will refer to $n$ such that one of the conditions in the left-hand
side of \eqref{e:good.K} is satisfied as ``bad''; otherwise we will refer to $n$ as
``good''. 

Assuming, for a moment, that such a sequence $(c_i)$ exists, we proceed
to bounds the norms of the two terms in the decomposition of $e_n$ in
\eqref{e:decompose.e}. Starting with the term $u_n$, we have for
$n\geq n_0$, 
\begin{equation}\label{eqn:u_n}
   \begin{aligned}
    u_{n+1} &= P_{K(n+1)} e_{n+1} = P_{K(n+1)}\left(\left(I+\frac{1}{n+1}(L-I)\right)e_n\right) + P_{K(n+1)}r_n
\end{aligned} 
\end{equation}
$$
    = P_{K(n+1)}\left(\left(I+\frac{1}{n+1}(L-I)\right)u_n\right) + P_{K(n+1)}\left(\left(I+\frac{1}{n+1}(L-I)\right)v_n\right) + P_{K(n+1)}r_n.
$$
By \eqref{e:R.L2}, $\|P_{K(n+1)}r_n\|_1 =
O(n^{-1-\epsilon})$. Further,
$$
\begin{aligned}
&\left\| P_{K(n+1)}\left(\left(I+\frac{1}{n+1}(L-I)\right)v_n\right) 
\right\|_1 \leq |(e_n)_{K(n+1)}| \\
& \hskip 0.3in +\frac{1}{n+1}
\cdot \left( \frac{\pi(K(n+1)+\delta)}{(2\pi-1)+\delta
    \pi}+\frac{(1-\pi)K(n+1)}{2\pi-1}+1 \right)|(e_n)_{K(n+1)}|\\
& \hskip 0.3in + \frac{1}{n+1} \cdot
\frac{(1-\pi)(K(n+1)+1)}{2\pi-1}\bigl( |(e_n)_{K(n+1)}|+|(e_n)_{K(n+1)+1}|\bigr), 
\end{aligned}
$$
where we used the fact that $\|P_{K(n+1)} v_n\|_1 \leq
|(e_n)_{K(n+1)}|$, and if $(w_n)_k = (v_n)_{k+1}$ for all $k$, then $\|P_{K(n+1)} w_n\|_1 \leq
|(e_n)_{K(n+1)}|+|(e_n)_{K(n+1)+1}|$. 
Notice that for every ``good'' $n$, 
$$
|(e_n)_{K(n+1)}|\leq \mathbb{E}[N_{K(n+1)}(n)]/n+\pi p_{K(n+1)}\leq
n^{-1.1}+CK(n+1)^{-1-\gamma}\leq Cn^{-1-\min(0.1,\gamma)}, 
$$
and similarly
$|(e_n)_{K(n+1)+1}|\leq Cn^{-1-\min(0.1,\gamma)}$.  Therefore for the ``good''  $n$, 
\begin{align*}
\left\| P_{K(n+1)}\left(\left(I+\frac{1}{n+1}(L-I)\right)v_n\right)
\right\|_1
 \leq Cn^{-1-\min(0.1,\gamma)}. 
\end{align*}
On the other hand, for the ``bad'' $n$ we use the trivial bound
$$
N_{K(n+1)}(n) \leq e_n/K(n+1) \leq (n+1)/[c_0(n+1)/2] =2/c_0
$$
to obtain a worse bound
\begin{align*}
\left\| P_{K(n+1)}\left(\left(I+\frac{1}{n+1}(L-I)\right)v_n\right)
\right\|_1
 \leq Cn^{-1}. 
\end{align*}

Next, notice that by the choice of $c_0$ and the fact that $K(n)\leq
c_0n$ for all $n\geq n_0$,  the matrix $I+\frac{1}{n+1}(L-I)$
restricted to its first $K(n)+1$ rows and columns is nonnegative, as
long as $n_0$ is large enough.  Therefore,  
\begin{align*}
    \left\|P_{K(n+1)}\left(\left(I+\frac{1}{n+1}(L-I)\right)u_n\right)\right\|_1
  &\leq \max_{0\leq j\leq K(n)}\left\|\left(I+\frac{1}{n+1}(L-I)\right)_{\cdot j} \right\|_1\|u_n\|_1\\
    &\leq \left(1-\frac{1}{n+1}\right)\|u_n\|_1,
\end{align*}
where $A_{\cdot j}$ is the $j$th column of a matrix $A$. It follows from \eqref{eqn:u_n}  that 
\begin{align*}
    \|u_{n+1}\|_1 
  &\leq \left(1-\frac{1}{n+1}\right)\|u_n\|_1  +
    Cn^{-1-\min(\epsilon,0.1,\gamma)} +Cn^{-1}\one(n\ \text{is "bad"}). 
\end{align*}
Iterating this bound from $n_0$ to $n\in[3^{k_0-1}n_0,3^{k_0}n_0]$ for
some $k_0\geq 1$ we obtain 
\begin{align*}
    \|u_{n}\|_1 
  &\leq \|u_{n_0}\|_1\prod_{m=n_0+1}^n\left(1-\frac{1}{m}\right) \\
    &+ C\sum_{d=n_0}^{n-1}  \left[ d^{-1-\min(\epsilon,0.1,\gamma)} +d^{-1}\one(d\ \text{is "bad"})
\right]\prod_{m=d+2}^n\left(1-\frac{1}{m}\right)
\\
  &\leq Cn^{-1}  + Cn^{-\min(\epsilon,0.1,\gamma)}
    +Cn^{-1}\sum_{k=1}^{k_0} (3^{k-1}n_0)^{0.2} 
    \leq   Cn^{-\min(\epsilon, 0.1,\gamma)}. 
\end{align*}

Furthermore, 
\begin{align*}
    \|v_n\|_1 &\leq n^{-1}\sum_{j=K(n)}^\infty \mathbb{E}[N_{j}(n)] + \sum_{j=K(n)}^\infty  p_j
\leq n^{-1}\mathbb{E}\bigl( e_n/K(n)\bigr) +C\sum_{j=K(n)}^\infty j^{-1-\gamma}\\
&\leq n^{-1}\bigl( (n+1)/(c_0n/2)\bigr) + CK(n)^{-\gamma}
\leq  Cn^{-\min(\gamma,1)}.
\end{align*}
Combining the two bounds results in 
$$
\sum_{k=0}^{\infty}\bigl|p_k-\mathbb{E}[N_k(n)]/(\pi
n)\bigr|=\|e_n\|_1/\pi = (\|u_n\|_1 +\|v_n\|_1)/\pi 
\leq Cn^{- \min(\epsilon, 0.1,\gamma) }.
$$
This, along with
\eqref{e:diff.2}, gives us 
\begin{align*}
    \sum_{k=0}^{\infty}\left|p_k-\mathbb{E}\big[
  N_k(n)/v_n\big]\right| &\leq
  \sum_{k=0}^{\infty}\big|p_k-\mathbb{E}[N_k(n)]/(\pi n)\big|
      + \sum_{k=0}^{\infty}\big|\mathbb{E}\big[N_k(n)/v_n\big]-\mathbb{E}[N_k(n)]/\pi n\big| \\
    &\leq Cn^{-\min(\epsilon, 0.1,\gamma)}+O(n^{-(1-\rho)})  \leq Cn^{-\kappa},
\end{align*}
with $\kappa = \min(\epsilon, 0.1,\gamma,(1-\rho))$. This proves
\eqref{e:total.var}, assuming existence of a sequence $(c_i)$ such
that the function $K$ in \eqref{e:Kn} satisfies \eqref{e:good.K}. We
now proceed to show this existence.

Consider first the   interval $[n_0,3n_0]$. Notice that
$$
\sum_{k\geq c_0n/2} k N_{k}(n)\leq e_n\leq n+1,
$$
implying that for large $n$, 
$$
\sum_{k\geq c_0n/2}\mathbb{E}[N_{k}(n)]\leq 2.1/c_0.
$$
If
$$
B_n = \bigl\{ k\geq c_0n/2:\, \mathbb{E}[N_{k}(n)]>n^{-0.1}\bigr\},
$$
then its cardinality satisfies $\# B_n\leq Cn^{0.1}$.  Let
\begin{align*}
A_n =& \bigl\{ c\in[5c_0/8,c_0]: \lfloor c'n_0\rfloor + \lfloor
c(n+1-n_0)\rfloor \in B_n\ \text{or} \ \lfloor c'n_0\rfloor + \lfloor
c(n+1-n_0)\rfloor +1 \in B_n\bigr\} \\
 =& \bigcup_{\substack{k+\lfloor c'n_0\rfloor \in B_n\\ \text{or
  }k+\lfloor c'n_0\rfloor +1\in B_n}}
  \left[\frac{k}{n+1-n_0},\frac{k+1}{n+1-n_0}\right) \cap [c_0/2,c_0].
\end{align*}
Its Lebesgue measure is at most
$$
\text{Leb} (A_n)\leq 2\# B_n/(n+1-n_0)\leq  \frac{Cn^{0.1}}{n+1-n_0}.
$$
Therefore,
\begin{align*}
&\int_{5c_0/8}^{c_0}\left( \sum_{n=n_0}^{3n_0}\mathbbm{1}_{\lfloor c'n_0\rfloor + \lfloor
c(n+1-n_0)\rfloor\in B_n \ \text{or} \ \lfloor c'n_0\rfloor + \lfloor
c(n+1-n_0)\rfloor +1 \in B_n    } \right) dc \\
                 =&\sum_{n=n_0}^{3n_0}|A_n| 
\leq    C\sum_{n=n_0}^{3n_0}\frac{n^{0.1}}{n+1-n_0}\ \leq
      Cn_0^{0.1}\log n_0.
\end{align*}
This implies that there must exist $c\in[5c_0/8,c_0]$ such that 
$$
\sum_{n=n_0}^{3n_0}\mathbbm{1}_{\lfloor c'n_0\rfloor + \lfloor
c(n+1-n_0)\rfloor\in B_n \ \text{or} \ \lfloor c'n_0\rfloor + \lfloor
c(n+1-n_0)\rfloor +1 \in B_n    }  \leq Cn_0^{0.2}.
$$
We use this $c$ as $c_1$ in \eqref{e:Kn} and, hence, we have
constructed the function $K(n)$ on the interval $[n_0,3n_0]$.  For the
next interval, we use its  left-end point
$3n_0$ as the starting point $n_0$ above and construct the function
$K(n)$ on that interval in the same way. The construction is
accomplished inductively. 
 
It remains to prove the statement \eqref{e:nonrand.conv} of Theorem
\ref{thm:convergence+concentration}. The main step is to establish the
following concentration result for the counts $N_k(n)$:   there is a
finite positive constant $C$ and $N$ such that such that for all
$n\geq N$, 
\begin{equation}\label{eqn:azuma}
     \mathbb{P}\bigl( |N_k(n)-\mathbb{E}[N_k(n)]| \geq t\bigr)  \leq 2e^{-C^{-1}t^2/(n (\log n)^{2})}
\end{equation}
for all $k$ and all $t> 0$.

Assuming, for a moment, that this is true, we derive
\eqref{e:nonrand.conv} as follows. By \eqref{e:total.var} and
\eqref{e:diff.2} we can
choose $N$ such that   for all $n\geq N$,
$$
    \sum_{k=0}^{\infty}\left|p_k-\mathbb{E}[N_k(n)/v_n]\right| \leq
    \epsilon/3 \ \ \text{and} \ 
    \sum_{k=0}^{\infty}\left|\mathbb{E}\big[N_k(n)/v_n\big]-\mathbb{E}[N_k(n)]/(\pi n)\right|\leq
    \epsilon/3. 
$$
Therefore, for such $n$, for all $k$,  by \eqref{eqn:azuma} and \eqref{approx:vertex_concentration}, 
\begin{align*}
   &\mathbb{P}\left( \left|N_k(n)/v_n-p_k\right| > \epsilon\right)
  \leq \mathbb{P}\left( \left| N_k(n)/v_n-\mathbb{E}[N_k(n)]/(\pi n)\right| > \epsilon/3\right)\\
    &\hskip 0.2in \leq \mathbb{P}\left( \left|N_k(n)/(\pi
      n)-\mathbb{E}[N_k(n)]/(\pi n)\right| > \epsilon/6\right)
      + \mathbb{P}\left( \left| N_k(n)/(\pi n)-N_k(n)/v_n\right| > \epsilon/6\right)\\
    &\hskip 0.2in \leq \mathbb{P}\left( \left|N_k(n)-\mathbb{E}[N_k(n)]\right| >
      \epsilon \pi n/6\right) 
      + \mathbb{P}\left( \left|v_n/(\pi n)-1\right| > \epsilon/6\right)\\
    &\hskip 0.2in \leq  Ce^{-C^{-1}\min(\epsilon,\epsilon^2)n/(\log
      n)^{2}}, 
\end{align*}
as required.

The inequality \eqref{eqn:azuma} is a concentration inequality and we
prove it by representing the random variable
$N_k(n)-\mathbb{E}[N_k(n)]$ as the sum of suitably bounded martingale
differences and then appealing to the Azuma inequality. Let $\calF(j)
= \sigma\bigl( G(1),\ldots, G(j)\bigr), \, j=0,1,2,\ldots$ be the
filtration generated by the growing graph. Then 
$$
M_{j}=\mathbb{E}[N_k(n)\mid \calF(j)], \qquad  j=0,1,\ldots, n
$$
is a Doob martingale with  $M_0=\mathbb{E}[N_k(n)]$ and
$M_n=N_k(n)$. We proceed to derive a uniform upper bound on the
differences $|M_j-M_{j-1}|, \, j=1,\ldots, n$. It is standard to check
that for each such $j$, with probability 1, 
 \begin{equation} \label{e:bound.osc}
|M_j-M_{j-1}|\leq
\sup_{H_1,H_2\in\mathcal{H}(G(j-1))}
\bigl|f_j(H_1)-f_j(H_2)\bigr|, 
\end{equation}
where $\calH(G(j-1))$ is the collection of all graphs that can arise
by performing a single system update step (either adding a vertex and
an edge or deleting an edge) starting with the graph
$G(j-1)$. Furthermore, for a fixed graph $H$,
\[
f_j(H):=\mathbb{E}\left[N_k(n)\mid G(j)=H\right].
\]
We refer the reader to \cite{vanderHofstad:2016} for more information
on \eqref{e:bound.osc}. The difference $f_j(H_1)-f_j(H_2)$ is the
difference of the expected values of $N_k(n)$ starting at time $j\leq
n$ from two difference graphs, $H_1$ and $H_2$, both of which have to
be in $\calH(G(j-1))$. To bound this difference we use coupling - we
couple the two sequences of graphs that arise starting at $H_1$ and
$H_2$. The coupling idea is also used in \cite{vanderHofstad:2016},
but there the coupling task is easier: indeed, $H_1$ and $H_2$
are similar because both 
arise by adding a node and an edge to the graph. In our case 
the two graphs may be less similar because one may arise by adding a
node and an edge to a given graph, wheres the second one may arise by
deleting an edge from the same graph. In particular, $H_1$ and $H_2$
may not have the same numbers of vertices and edges. 

We now spell out the details of our coupling procedure. 
Let $\mathcal{H}^{+}(G(j-1)),\, \mathcal{H}^{-}(G(j-1))\subseteq
\mathcal H(G(j-1))$ denote the sets of graphs obtained, respectively,
by adding a vertex and an edge to $G(j-1)$ and by deleting an edge in
$G(j-1)$. 

We first consider the case when $H_1,H_2\in \mathcal{H}^{+}(G(j-1))$.
In this case $H_1$ and $H_2$ have the same numbers of vertices and
edges. We denote by $v$ the new vertex added to $G(j-1)$ in $H_1$ and
$H_2$, thus identifying the vertices in these graphs. We also denote
by $u_i$ the vertex in $G(j-1)$ to which $v$ is attached in 
$H_i, \, i=1,2$. Then all existing vertices in $H_1$ and $H_2$ apart
from $u_1$ and $u_2$ have the same in-degrees in both graphs.  On a
new probability space $\bigl( \Omega^\prime, \calF^\prime,
\PP^\prime\bigr)$ we construct random sequences 
of graphs, $\bigl(G^{(i)}(j+1), \ldots, G^{(i)}(n)\bigr)$, 
arising according to the rules of our model starting at time
$j$ with the graph $H_i, \, i=1,2$.  
We couple the two sequences of
graphs by maintaining at all times equal numbers on vertices and edges
between the two sequences, which is the case at time $j$. Given that,
we synchronize ``additions'' and ``deletions'' in both
sequences. Starting at time $j+1$, if a new vertex is added in the
first sequence, and is connected to any vertex in $H_1$ other than
$u_1$ or $u_2$, this new vertex is then connected to the same vertex
in $H_2$ in the second sequence. If a new vertex is added in the
first sequence, and is connected to one of the vertices 
$u_1$ or $u_2$ in $H_1$, then this new vertex is also added to one of
these vertices (but not necessarily the same one)  in $H_2$ in the
second sequence. Similarly, if an edge is deleted in the first
sequence, whose endpoint is neither $u_1$ nor $u_2$, then the same
edge is deleted in the second sequence. If an edge whose endpoint is
either $u_1$ or $u_2$ is deleted in the first sequence, then also an
edge (not necessarily the same) whose endpoint is
either $u_1$ or $u_2$ is deleted in the second sequence.  In this way
the graphs obtained at time $j+1$ in both sequences have identical
sets of vertices, same numbers of edges, and all the vertices other
than $u_1$ or $u_2$ have identical in-degrees in both graphs. The
coupling procedure is then continued until time $n$, and ends with
graphs $G^{(1)}(n)$ and $G^{(2)}(n)$ still having identical
sets of vertices, same numbers of edges, and all the vertices other
than $u_1$ or $u_2$ have identical in-degrees in these 
graphs. Let $N_k^{(i)}(n)$ be the number of vertices of in-degree $k$ in
$G^{(i)}(n), \, i=1,2$. Then 
\begin{align} \label{e:bound.md}
\bigl|f_j(H_1)-f_j(H_2)\bigr| = \bigl| \EE^\prime
  \bigl(N_k^{(1)}(n)\bigr)-\EE^\prime \bigl(N_k^{(2)}(n)\bigr)\bigr|
  \leq \EE^\prime \bigl|N_k^{(1)}(n)-N_k^{(2)}(n)\bigr|
\leq 2.
  \end{align}
      
An identical coupling construction shows that \eqref{e:bound.md} still
holds if $H_1,H_2\in \mathcal{H}^{-}(G(j-1))$. We now consider the
most complicated case,  when $H_1\in \mathcal H^{+}(G(j-1))$ and
$H_2\in\mathcal H^{-}(G(j-1))$. Once again, on a
new probability space $\bigl( \Omega^\prime, \calF^\prime,
P^\prime\bigr)$ we construct random sequences 
of graphs, $\bigl(G^{(i)}(j+1), \ldots, G^{(i)}(n)\bigr)$, 
arising according to the rules of our model starting at time
$j$ with the graph $H_i, \, i=1,2$.  We use superscripts to
distinguish between quantities belonging to the two sequences. In the
obvious notation, the initial (at time $j$) numbers of edges and vertices in the two
sequences satisfy 
\[
e_j^{(1)} = e_j^{(2)} + 2,
\qquad
v_j^{(1)} = v_j^{(2)} + 1 .
\]
We identify the common vertices of the two graphs.

We construct a coupling of the two sequences of graphs as
follows. First of all, we synchronize  once again ``additions'' and ``deletions'' in both
sequences. This is possible as long as the current graph in the second
sequence has any edges. If this is not the case at any point, the
second sequence performs an ``addition'' as it must, while the first
sequence  chooses independently between ``addition'' and ``deletion'';
if it is ``deletion'', the deleted edge is also chosen independently. 
After this synchronization of
``additions'' and ``deletions'' resumes for as long as possible. This
guarantees that the two sequences satisfy  
\begin{equation} \label{e:compare}
e_i^{(2)} \le e_i^{(1)} \le e_i^{(2)}+2,
\qquad
 v_i^{(2)}  \leq v_i^{(1)} \leq  v_i^{(2)} + 1, \ i=j, \ldots, n.
\end{equation} 

Consider now  time $i=j+1,\ldots, n$ at  which a new vertex $v$ is
added to the existing graphs in both sequences.  We identify this vertex in the
two new graphs. The vertex $v$ attaches to an existing vertex $u\in
V_{i-1}^{(k)}$ or to itself with probabilities 
\[
p_i^{(k)}(u)
=
\frac{d_{i-1}^{(k)}(u)+\delta}{e_{i-1}^{(k)}+\delta v_{i-1}^{(k)}+\delta},
\qquad
p_i^{(k)}(v)
=
\frac{\delta}{e_{i-1}^{(k)}+\delta v_{i-1}^{(k)}+\delta}, \ k=1,2.
\]
The above choices are made in a coupled way, and we use  the maximal
coupling. That is, if $u\in V_{i-1}^{(1)}\cap V_{i-1}^{(2)}$, then
with probability
\begin{equation} \label{e:max.couple1}
\min\left(   \frac{d_{i-1}^{(1)}(u)+\delta}{e_{i-1}^{(1)}+\delta
    v_{i-1}^{(1)}+\delta}, \, \frac{d_{i-1}^{(2)}(u)+\delta}{e_{i-1}^{(2)}+\delta v_{i-1}^{(2)}+\delta}   \right) 
\end{equation}
the new vertex $v$ is attached to $u$ in both new graphs, and with
probability
\begin{equation} \label{e:max.couple2}
\min\left(   \frac{\delta}{e_{i-1}^{(1)}+\delta
    v_{i-1}^{(1)}+\delta},\, \frac{\delta}{e_{i-1}^{(2)}+\delta v_{i-1}^{(2)}+\delta}\right)
\end{equation}
the new vertex $v$ creates a self-loop in  both new graphs. The
remaining mass in the probability laws of the attachement choices in
the both sequences of graphs is distributed arbitrarily (possibly,
independently).

Similarly, suppose that  at time $i=j+1,\ldots, n$ an edge is deleted
from the existing graphs in both sequences. Recall that one can choose
an edge to delete by, first, choosing a vertex with probability
proportional to its in-degree, and then deleting a randomly chosen
edge entering this vertex; see Remark \ref{rk:edge.c}. The
probabilities to select an existing  vertex $u\in
V_{i-1}^{(k)}$ is $q_i^{(1)}(u)=d_i^{(k)}(u)/e_i^{(k)}$, $k=1,2$, and
we again couple the selection in the two sequences using the maximal
coupling:  if $u\in V_{i-1}^{(1)}\cap V_{i-1}^{(2)}$, then with
probability $\min\bigl(
d_i^{(1)}(u)/e_i^{(1)},d_i^{(2)}(u)/e_i^{(2)}\bigr)$ the vertex $u$ is
selected in both sequences of graphs and one of edges entering it in
each graph is randomly chosen and deleted (this choice is not
coupled). Once again, the remaining mass in the probability laws of
the selection choices in
the both sequences of graphs is distributed arbitrarily (possibly,
independently).

With the coupling described above, we follow the dynamics of the
number of vertices in the two sequences of graphs with possibly
unmatched in-degrees. Consider the following random sets defined on
the probability space $\bigl( \Omega^\prime, \calF^\prime,
\PP^\prime\bigr)$: 
\begin{align*}
&E_i = \bigl\{u\in V^{(1)}_i \cap V^{(2)}_i: d_i^{(1)}(u)= 
d_i^{(2)}(u)\bigr\}, \\
&D_i = \bigl\{u\in V^{(1)}_i \cap V^{(2)}_i: d_i^{(1)}(u)\neq
d_i^{(2)}(u)\bigr\} \cup \bigl( V^{(1)}_i \triangle V^{(2)}_i\bigr), 
 \, i= j-1, j, \ldots, n. 
\end{align*}
Clearly,  $D_{j-1}=\emptyset$. An important observation is that for
$u\in E_i$, the minima in \eqref{e:max.couple1} and
\eqref{e:max.couple2} are necessarily achieved on the first sequence
of graphs; see \eqref{e:compare}. The situation is the same in the
case of a ``deletion'' in the two sequences of graphs.

Notice that the cardinality of $V^{(1)}_i\setminus V^{(2)}_i$ is equal to 1 for all
$i=j,\ldots, n$, while the set $V^{(2)}_i\setminus V^{(1)}_i$ can increase with $i$ only
when $e_i^{(2)}=0$, and this can happen because of coupling at most
once: indeed, each time $V^{(2)}_i\setminus V^{(1)}_i$ increases, the difference in the
number edges between the graphs in the first and the second sequences
decreases by 2, and once that difference vanishes, addition of new
vertices is synchronized between the two sequences. Therefore, the
cardinality of $V^{(2)}_i\setminus V^{(1)}_i$ is bounded by 1 for all
$i=j,\ldots, n$. We define
$$
K_i=\big| \bigl\{u\in V^{(1)}_i \cap V^{(2)}_i: d_i^{(1)}(u)\neq
d_i^{(2)}(u)\bigr\} \big|,
$$
the cardinality of the main part of the ``mismatch'' set $D_i$. Let 
$\calF^\prime(i) = \sigma\bigl( G^{(d)}(j), \ldots,
G^{(d)}(i)$,  $d=1,2\bigr)$, $i=j,\ldots, n$.  On the event $\{
e_i^{(2)}>0\}\in \calF^\prime(i) $ we consider a random vector
$(X_+,Y_+)$ taking values in $V_i^{(1)}\times V_i^{(2)}$, and
representing the existing vertices in the graphs $G^{(1)}(i)$ and
$G^{(2)}(i)$, to which a new vertex added at time $i+1$ to both
graphs, connects under the coupling described above, if ``addition''
occurs at that time. Similarly, on the same event   we consider a random vector
$(X_-,Y_-)$, also taking values in $V_i^{(1)}\times V_i^{(2)}$, and
representing the existing vertices in the graphs $G^{(1)}(i)$ and
$G^{(2)}(i)$, chosen  under the coupling described above to have one
of their incoming edges deleted,  if ``deletion'' occurs at that
time. We already know that under the maximal coupling, if $X_+\in
E_i$, then $Y_+=X_+$, and if $X_-\in
E_i$, then $Y_-=X_-$. 
Therefore, on the event $\{
e_i^{(2)}>0\}$, 
\begin{equation*}\label{eqn:mismatches}
\begin{aligned}
   \mathbb{E}^\prime[K_{i+1}\mid \calF^\prime(i)]= K_i+\pi  
   \mathbb{P}^\prime ( X_+\notin E_i,Y_+\in E_i)  
 + (1-\pi) \mathbb{P}^\prime (X_-\notin E_i,Y_-\in E_i). 
\end{aligned}
\end{equation*}
 Note that by \eqref{e:compare}, 
\begin{align*}
    \mathbb{P}^\prime (X_+\notin E_i,Y_+\in E_i)&= \sum_{u\in E_i} \left(\frac{d^{(2)}_i(u)+\delta}{e_i^{(2)}+\delta v_i^{(2)}+\delta}-\frac{d^{(1)}_i(u)+\delta}{e_i^{(1)}+\delta v_i^{(1)}+\delta}\right)\\
        &\leq \sum_{u\in E_i} \left(\frac{d^{(2)}_i(u)+\delta}{e_i^{(2)}+\delta v_i^{(2)}+\delta}
        - \frac{ d^{(2)}_i(u)+\delta}{e_i^{(2)}+\delta
          v_i^{(2)} +2(1+\delta)}\right)\\
         &\leq \frac{2+\delta}{e_i^{(2)}+\delta v_i^{(2)}+2(1+\delta)}.
\end{align*}
Similarly,
\begin{align*}
  \mathbb{P}^\prime (X_-\notin E_i,Y_-\in E_i)\leq \frac{2}{e_i^{(2)}+2}.
\end{align*}
Therefore, 
\begin{align*}
    \mathbb{E}^\prime[K_{i+1}]&\leq \mathbb{E}^\prime[K_i]+\pi \mathbb{E}^\prime\left[\frac{2+\delta}{e_i^{(2)}+\delta v_i^{(2)}+2(1+\delta)}\right] + (1-\pi)\mathbb{E}^\prime\left[\frac{2}{e_i^{(2)}+2}\right].
 \end{align*}
If $e^{(2)}_{j}>0$, then for $i\geq j$ the sum $e_i^{(2)}+\delta
v_i^{(2)}$ can be viewed as belonging to a growing sequence of graphs
that starts at time zero with the graph $H_2$, and it stochastically
dominates the same sum in a growing sequence of graphs
that starts at time zero with $G(0)$ (two vertices and one
edge). Therefore, by \eqref{e:expect.frac},
\begin{equation} \label{e:stochD}
\mathbb{E}^\prime\left[\frac{2+\delta}{e_i^{(2)}+\delta
    v_i^{(2)}+ 2(1+\delta)}\right] \leq \frac{C}{i-(j-1)}
\end{equation}
for some absolute constant $C$.  If $e^{(2)}_{j}=0$, then the same
stochastic domination holds for all $i\geq j+1$, so \eqref{e:stochD}
still holds, perhaps with a larger absolute constant $C$. Similarly,
for $i\geq j$, 
$$
\mathbb{E}^\prime\left[\frac{2}{e_i^{(2)}+2}\right]\leq \frac{C}{i-(j-1)}
$$
for some absolute constant $C$, and so 
\begin{align*}
    \mathbb{E}^\prime[K_{i+1}]&\leq \mathbb{E}^\prime[K_i] +C\bigl(
                                i-(j-1)\bigr)^{-1}, \ i=j,\ldots, n, 
\end{align*} 
implying that 
\begin{align*}
\mathbb{E}^\prime[K_n]
&\le  C\sum_{i=1}^{n-j}1/i
\leq C\log (n-j+1) .
\end{align*}
Therefore, as in \eqref{e:bound.md}, 
\begin{align} \label{e:bound.md1}
\bigl|f_j(H_1)-f_j(H_2)\bigr| =& \bigl| \EE^\prime
   [N_k^{(1)}(n) ]-\EE^\prime  [N_k^{(2)}(n)]\bigr|
   \leq \EE^\prime \bigl[\bigl|N_k^{(1)}(n)-N_k^{(2)}(n)\bigr| \big]\\
\notag \leq& \EE^\prime  \big[|D_n|  \big]\leq C\log (n-j+1)+3. 
\end{align}

It follows by \eqref{e:bound.osc}, \eqref{e:bound.md} and
\eqref{e:bound.md1} that
$$
|M_j-M_{j-1}|\leq C\log (n-j+1)+3, \ j=1,\ldots, n.
$$
Therefore by Azuma's inequality for any $t>0$, 
\begin{align*}
   &\mathbb{P}^\prime( |N_k(n)-\mathbb{E}[N_k(n)]| \geq t) = \mathbb{P}(
   |M_n-M_0| \geq t) \\
\leq& 2\exp\left\{{-\frac{t^2}{C\sum_{j=1}^n \left(\log
  (n-j+1)+3\right)^{2}}}\right\}\leq  2e^{-C^{-1}t^2/(n (\log n)^{2})},
\end{align*}
proving \eqref{eqn:azuma} and completing the proof of Theorem
\ref{thm:convergence+concentration}.

\section{Tail asymptotics}  \label{sec:conclusion xc}
\setcounter{thm}{0}\setcounter{Corol}{0}\setcounter{lemma}{0}\setcounter{pron}{0}\setcounter{equation}{0}
\setcounter{remark}{0}\setcounter{exam}{0}\setcounter{property}{0}\setcounter{defin}{0}

In this section we prove Theorem~\ref{thm:tails}.  Recall the
constants $\alpha_0$ and $\alpha_2$  in \eqref{e:all.coeff} and
the numbers $A$ and $B$ in \eqref{e:AB}. We 
define a function $f$ on the interval $[0,1)$ by 
\begin{equation} \label{e:f1}
f(x) =  (1-x)^{-\delta} e^{1/(\alpha_0(1-x))}\alpha_0^{-1}
\int_x^1 (1-y)^{-2+\delta}    e^{-1/(\alpha_0(1-y))}  dy
\end{equation}
if $ \alpha_0=\alpha_2$,
\begin{equation} \label{e:f2}
  f(x)  = (\alpha_2-\alpha_0 x)^{A}  (1-x)^{B}  
    \int_x^1  (\alpha_2-\alpha_0 y)^{-(A+1)}  (1-y)^{-(B+1)}
    dy
\end{equation}
if $ \alpha_0<\alpha_2$, and 
\begin{equation} \label{e:f3}
f(x) = |\alpha_2-\alpha_0 x|^{A}  (1-x)^{B}
          \int_{\alpha_2/\alpha_0}^x (\alpha_0 y-\alpha_2)^{-1} |\alpha_0 y-\alpha_2|^{-A} (1-y)^{-(B+1)} dy
\end{equation}
if $ \alpha_0>\alpha_2$. The heart of the
proof of Theorem~\ref{thm:tails} is contained in the following proposition.  

\begin{proposition} \label{pr:heart}
The function $f$ has an extension to the complex plane that is
analytic in the open ball $\{ |z|<1\}$. The coefficients in the power
series
\begin{equation} \label{e:power.ser}
f(z) = \sum_{n=0}^{\infty} p_n z^n, \ |z|<1
\end{equation}
satisfy the recursion \eqref{eqn:rr}. Moreover, \eqref{e:pk.asymp}
holds. 
\end{proposition}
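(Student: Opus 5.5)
The plan is to go through the generating function. The key observation is that the recursion \eqref{eqn:rr}, whose coefficients are affine in $k$, is equivalent to a first-order linear ODE for $F(z)=\sum_k p_k z^k$. Multiplying \eqref{eqn:rr} by $z^k$ and summing over $k\ge 0$ (with $p_{-1}=0$) gives
\begin{equation*}
(\alpha_2+\alpha_1 z+\alpha_0 z^2)F'(z)+(\beta_1+\beta_0 z)F(z)=-1 .
\end{equation*}
Two identities from \eqref{e:all.coeff} simplify this: $\alpha_0+\alpha_1+\alpha_2=0$ and $\beta_0+\beta_1=-1$. Hence the ODE reads
\begin{equation*}
(1-z)(\alpha_2-\alpha_0 z)F'(z)-\bigl(1+\beta_0(1-z)\bigr)F(z)=-1 .
\end{equation*}
Conversely, any solution analytic in $\{|z|<1\}$ has Taylor coefficients satisfying \eqref{eqn:rr}, by equating coefficients. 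So the first part of the proof is to check that $f$ in \eqref{e:f1}--\eqref{e:f3} solves this ODE on $[0,1)$ and continues analytically to the unit disc.

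\textbf{Verifying the ODE and analyticity.} The partial-fraction decomposition of $\bigl(1+\beta_0(1-z)\bigr)/\bigl((1-z)(\alpha_2-\alpha_0z)\bigr)$ produces exactly the exponents $A$ and $B$ of \eqref{e:AB}, with $A+B=-\delta$. It degenerates to a double pole at $z=1$, producing $e^{1/(\alpha_0(1-z))}$, when $\alpha_0=\alpha_2$. So \eqref{e:f1}--\eqref{e:f3} are variation-of-constants formulas, and differentiation confirms the ODE. The choice of integration limit fixes the unique solution that is regular at the singular point inside $[0,1]$.
\begin{itemize}
\item If $\alpha_0<\alpha_2$, then $B<0$, so the integrand in \eqref{e:f2} is integrable at $1$. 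The singular point $r=\alpha_2/\alpha_0$ lies outside the closed disc, and $f$ is analytic in the disc with only an algebraic correction at $z=1$.
\item If $\alpha_0>\alpha_2$, then $r\in[0,1)$ and $A<-\delta<0$. The homogeneous solution blows up like $|z-r|^{A}$, and starting the integral at $r$ removes it. A local Frobenius-type expansion at $r$ will show that the resulting $f$ is analytic there.
\item In the critical case, analyticity on the disc follows by writing the integral along a path inside the disc.
\end{itemize}

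\textbf{Coefficient asymptotics.} To obtain \eqref{e:pk.asymp}, and with it the bound \eqref{e:bound.pk} needed in Section~\ref{sec:convergence}, I would avoid case-by-case singularity analysis and use a single integral representation. Substitute $y=1-(1-x)/(1+s(1-x))$ in the critical case, or the analogous substitution $t=\bigl((1-y)/(1-x)\bigr)$-type rescalings in the other cases. This rewrites $f(z)$ as $\int_0^\infty w(s)\,\Phi_s(z)\,ds$, where $w\ge 0$ is explicit and each $\Phi_s(z)$ is a product of powers of $(1-z)$ and $(1-cz)$ with explicit Taylor coefficients given by Gamma-ratios or negative-binomial expressions. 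Then $p_k=\int_0^\infty w(s)[z^k]\Phi_s\,ds$, which also shows $p_k\ge 0$.

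A Laplace-method analysis in $k$ should then give each regime:
\begin{itemize}
\item for $\alpha_0>\alpha_2$, a mixture of $k^{-(B+1)}$ terms, leading to $\Gamma(-A)/\Gamma(\delta)$;
\item for $\alpha_0<\alpha_2$, the geometric factor $(\alpha_0/\alpha_2)^k$ from the singularity at $r$, with $k^{-(A+1)}$ and $\Gamma(-B)$;
\item for $\alpha_0=\alpha_2$, a saddle point at $s\asymp\sqrt{k}$ between $e^{-s/\alpha_0}$ and the coefficient decay, producing $e^{-2\alpha_0^{-1/2}k^{1/2}}k^{\delta/2-3/4}$ and the factor $\sqrt{\pi_*}$.
\end{itemize}
As a cross-check in the noncritical cases, transfer theorems (Flajolet--Odlyzko) applied at the dominant singularity ($z=1$ or $z=r$) should give the same answer, since $f$ is $\Delta$-analytic there.

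\textbf{Main obstacle.} I expect the critical case to be the hardest step: the singularity at $z=1$ is irregular, so standard transfer theorems do not apply. The Laplace/saddle-point computation must track constants precisely to recover $c_{\pi,\delta}=e^{1/(2\alpha_0)}\alpha_0^{-(\delta/2+1/4)}\sqrt{\pi_*}/\Gamma(\delta)$. A secondary delicate point is establishing analyticity at the interior singular point $r$ when $\alpha_0>\alpha_2$.
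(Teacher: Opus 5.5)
You reduce the recursion to $(1-z)(\alpha_2-\alpha_0z)F'-(1+\beta_0(1-z))F=-1$ correctly. Your critical-case plan is the paper's: your $s=\frac{1}{1-y}-\frac{1}{1-x}$ is the paper's $\alpha_0u$, which gives an $\mathrm{Exp}(1)$-mixture of negative binomial generating functions of shape $\delta$ and a Laplace argument at $s\asymp\sqrt{k}$. In the paper this is the short case, not the main obstacle.

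The gap is in the noncritical cases. The linear rescalings you name ($y=x+(1-x)t$, or $y=\alpha_2/\alpha_0+(x-\alpha_2/\alpha_0)t$) leave one homogeneous factor outside the integral. They produce exactly \eqref{e:ext.2} and \eqref{e:f3.alt}, i.e.\ kernels $(1-\tfrac{\alpha_0}{\alpha_2}z)^A(1-c_tz)^{-(A+1)}$, resp.\ $(1-z)^B(1-c_tz)^{-(B+1)}$, with $c_t<1$. When the outer exponent is positive (always for $\alpha_0>\alpha_2$, where $B>0$; and for $A>0$ in the other regime), two things go wrong.
\begin{itemize}
\item These kernels are not probability generating functions: for example $[z^1](1-z)^B(1-c_tz)^{-(B+1)}=(B+1)c_t-B<0$ for small $c_t$. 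So positivity is not shown.
\item More seriously, the termwise asymptotics cannot be integrated. For fixed $t$ and non-integer $B$, $[z^k](1-z)^B(1-c_tz)^{-(B+1)}\sim(1-c_t)^{-(B+1)}k^{-(B+1)}/\Gamma(-B)$. But $(1-c_t)^{-(B+1)}\asymp(1-t)^{-(B+1)}$, which is not integrable at $t=1$. The same happens at $t=0$ in \eqref{e:ext.2} when $A\ge 0$.
\end{itemize}
So your ``mixture of $k^{-(B+1)}$ terms'' does not yield the asymptotics of $p_k$. This non-uniformity is exactly why the paper switches to the transfer theorem, using:
\begin{itemize}
\item the splitting $f=g+h$;
\item the identity $d_{\pi,\rho}=\Gamma(-A)\Gamma(-B)/\Gamma(\delta)$;
\item a separate computation for integer $A$;
\item the rescaling $g(x)=f((\alpha_2/\alpha_0)x)$ when $\alpha_0>\alpha_2$.
\end{itemize}
What you call a cross-check is the paper's entire argument in these cases, and you do not carry out any of these steps. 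Separately, your ``algebraic correction at $z=1$'' when $\alpha_0<\alpha_2$ is wrong. There $f$ is analytic on $\mathbb{C}\setminus[\alpha_2/\alpha_0,\infty)$, and the rate $(\alpha_0/\alpha_2)^k$ requires this.

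Your unified plan can be repaired, and it is then simpler than the paper's. The right analogue of your critical substitution is the characteristic coordinate $d\tau=dy/\bigl((1-y)(\alpha_2-\alpha_0y)\bigr)$, i.e.
\[
e^{\tau/B}=\frac{(1-y)(\alpha_2-\alpha_0x)}{(\alpha_2-\alpha_0y)(1-x)} .
\]
Set $F_h(x)=(\alpha_2-\alpha_0x)^A(1-x)^B$ and $m(\tau)=\alpha_0B(e^{\tau/B}-1)$. One checks that
\[
F_h(x)/F_h(y)=e^{-\tau}\bigl((\alpha_2-\alpha_0x)/(\alpha_2-\alpha_0y)\bigr)^{-\delta},
\qquad
(\alpha_2-\alpha_0x)/(\alpha_2-\alpha_0y)=1+m(\tau)(1-x).
\]
Hence both \eqref{e:f2} and \eqref{e:f3} become
\[
f(x)=\int_0^\infty e^{-\tau}\bigl(1+m(\tau)(1-x)\bigr)^{-\delta}\,d\tau ,
\]
and $m(\tau)=\alpha_0\tau$ recovers the critical case. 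This is a genuine negative binomial mixture: the in-degree is a birth--death process with rates $\alpha_0(k+\delta)$ up and $\alpha_2k$ down, observed at an $\mathrm{Exp}(1)$ log-age. So $p_k\ge 0$ follows. Watson's lemma then gives exactly the constants of Theorem~\ref{thm:tails}, with no case split on $A$:
\begin{itemize}
\item if $\alpha_0>\alpha_2$, substitute $u=1/(1+m(\tau))$;
\item if $\alpha_0<\alpha_2$, substitute $w=e^{\tau/B}$ after factoring out $(\alpha_0/\alpha_2)^k$.
\end{itemize}
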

\begin{proof}
Beginning with function in \eqref{e:f1}, using the main branch of complex power
functions involved, the function of the complex variable $(1-z)^{-\delta}
e^{1/(\alpha_0(1-z))}$ is analytic in the unit radius open ball
around the origin, and so is the function $(1-z)^{-2+\delta}
e^{-1/(\alpha_0(1-z))}$. Extending the integral in \eqref{e:f1} as an
integral over a rectifiable path (say, over the straight line) from
$x$ to $1/2$ (say) is then analytic function of $x$ in the open ball. Since the real integral
$$
\int_{1/2}^1 (1-y)^{-2+\delta}    e^{-1/(\alpha_0(1-y))}  dy<\infty,
$$
this gives us the required analytic extension. The analytic extension
of the function $f$ in \eqref{e:f2} to the unit radius open ball
around the origin is constructed in the same way.

In the case of the function $f$ in \eqref{e:f3} we start by noticing
that, changing the variable of integration we can rewrite it in the
form
\begin{equation} \label{e:f3.alt}
f(x) = (1-x)^B\alpha_0^{-1}\int_0^1 t^{-(A+1)} \bigl(
1-\alpha_2/\alpha_0 -(x-\alpha_2/\alpha_0)t\bigr)^{-(B+1)}dt
\end{equation}
(defined by continuity as $B/|A|$ for $x=\alpha_2/\alpha_0$). For
every fixed $0<t<1$ the function of a complex-valued $x$ given by the main
branch of $\bigl(
1-\alpha_2/\alpha_0 -(x-\alpha_2/\alpha_0)t\bigr)^{-(B+1)}$ is
analytic inside the unit radius open ball
around the origin, so its integral (over the real $t$) satisfies
inside that ball the Cauchy-Riemann equations, hence is
analytic. Extending once again the function $(1-x)^B$ to an analytic
function over the ball, gives us the required analytic extension of the
function $f$ in \eqref{e:f3}.

Consider the restriction of the power series \eqref{e:power.ser} to
the real line; it converges at least on the interval $(-1,1)$. Since
$f$ is real-valued on that interval, the coefficients $(p_n)$ are
real. In all three cases, differentiating the function $f$ (of the
real variable) in a small interval around the origin and elementary
simplification shows that, in that small interval it satsfies the
ordinary differential equation
\begin{equation} \label{e:ode}
f^\prime(x)\bigl( \alpha_2+\alpha_1x+\alpha_2x^2\bigr) +f(x)\bigl(
\beta_1+\beta_0x\bigr) = -1.
\end{equation}
The left-hand side of this relation is a function with power expansion
$\sum q_nx^n$ around the origin with coefficients satisfying
$$
q_n=\alpha_2(n+1)p_{n+1}+(\beta_1+\alpha_1n)p_n
+(\beta_0+\alpha_o(n-1))p_{n-1}, \ \ n=0,1,2,\ldots,
$$
with $p_{-1}=0$. Since we must have $q_0=-1$ and $q_n=0$ for $n\geq
1$, this shows that the sequence $(p_n)$ satisfies the recursion
\eqref{eqn:rr}. 

It remains to prove that the sequence $(p_n)$ also satisfies
\eqref{e:pk.asymp}. Here we will use different arguments for three
cases \eqref{e:f1}, \eqref{e:f2} and \eqref{e:f3}.

We start with the function $f$ in \eqref{e:f1}, so
$\alpha_0=\alpha_2$. Changing the variable of integration to
$u=(y-x)(1-x)^{-1}(1-y)^{-1}/\alpha_0$ gives us, after elementary
calculations, 
$$
    f(x) =  \int_0^\infty  (1+\alpha_0(1-x)u)^{-\delta}\cdot  e^{-u}  du.
    $$
This identifies $f$ as the moment generating function of the negative
binomial distributions, all with shape parameter $\delta$ and
probabilities for success $p_u=1/(1+\alpha_0u)$, over values of $u>0$
chosen from the standard exponential distribution. Therefore, the
coefficients in the power series for $f$ are given by 
\begin{align*}
   p_k &= \int_0^\infty   e^{-t}\cdot \frac{\Gamma(k+\delta)}{k!\Gamma(\delta)}(1-p_t)^k p_t^\delta \, dt \\
   &= \int_0^\infty   e^{-t}\cdot \frac{\Gamma(k+\delta)}{k!\Gamma(\delta)}(\alpha_0t)^k (1+\alpha_0 t)^{-(k+\delta)} \, dt \\
   &= \frac{1}{\Gamma(\delta)}\int_0^\infty
     s^{\delta-1}e^{-s}\frac{(\alpha_0s)^k}{(1+\alpha_0s)^{k+1}}\, ds \\
&=\frac{k^{\delta/2}}{\Gamma(\delta)}\int_0^\infty
     t^{\delta-1}e^{-k^{1/2}t}\frac{(\alpha_0k^{1/2}t)^k}{(1+\alpha_0k^{1/2}t)^{k+1}}\,
                                                                           dt,      \ k=0,1,2,\ldots, 
\end{align*}
after easy manipulations. Let $0<\vep<\alpha_0^{-1/2}$ be a small
number. Note that, as $k\to\infty$, 
\begin{align*}
&\int_0^\vep
  t^{\delta-1}e^{-k^{1/2}t}\frac{(\alpha_0k^{1/2}t)^k}{(1+\alpha_0k^{1/2}t)^{k+1}}dt
\leq \int_0^\vep t^{\delta-1} \left( 1-\frac{1}{1+\alpha_0k^{1/2}t}\right)^kdt\\
  \leq& \delta^{-1}\vep^\delta \left( 1-\frac{1}{1+\alpha_0k^{1/2}\vep}\right)^k
        =O\bigl( e^{-k^{1/2}/(\alpha_0\vep)}\bigr). 
\end{align*}

Next we write
\begin{align*}
  &\int_\vep^\infty     t^{\delta-1}e^{-k^{1/2}t}
    \frac{(\alpha_0k^{1/2}t)^k}{(1+\alpha_0k^{1/2}t)^{k+1}}dt\\
=&\int_\vep^\infty  e^{1/(2\alpha_0^2t^2)}   \frac{t^{\delta-1}}{(1+\alpha_0k^{1/2}t)}
\exp\left\{ -k^{1/2}\bigl( t+1/(\alpha_0t)\bigr)\right\}I_k(t)\, dt,
 \end{align*}                
 where
 $$
 I_k(t)=\exp\left\{ k^{1/2}\left[
     \frac{1}{\alpha_0t}-k^{1/2}\log\left(1+\frac{1}{\alpha_0k^{1/2}t}\right)\right]
   - \frac{1}{2\alpha_0^2t^2}\right\}.
$$
Clearly, $I_k(t) =1+o(1)$ as $k\to\infty$ uniformly in
$t\in[\vep,\infty)$. Since the function $\phi(t)= t+1/(\alpha_0t)$ has
a unique minimum at the point $t_*=\alpha_0^{-1/2}$, we see that, as
$k\to\infty$, 
\begin{align*}
& \int_\vep^\infty     t^{\delta-1}e^{-k^{1/2}t}
    \frac{(\alpha_0k^{1/2}t)^k}{(1+\alpha_0k^{1/2}t)^{k+1}}dt\\
\sim&\int_\vep^\infty  e^{1/(2\alpha_0^2t^2)}   \frac{t^{\delta-1}}{(1 +\alpha_0k^{1/2}t)}
\exp\left\{ -k^{1/2}\bigl( t+1/(\alpha_0t)\bigr)\right\} dt\\
\sim& \, e^{1/(2\alpha_0^2t_*^2)} t_*^{\delta-1}\frac{1}{\alpha_0t_*}k^{-1/2}
\int_\vep^\infty  
\exp\left\{ -k^{1/2} \phi(t)\right\} dt.
\end{align*}
The behaviour of the final integral can be obtained using the
standard Laplace method for the asymptotic expansions of integrals;
see e.g. pp. 183-185 in
\cite{bleistein:handelsman:1975}. Specifically, as $k\to\infty$, 
$$
\int_\vep^\infty  
\exp\left\{ -k^{1/2} \phi(t)\right\} dt \sim
\sqrt{\frac{2\pi}{k^{1/2}\phi''(t_*)}}e^{-k^{1/2}\phi(t_*)}. 
$$
Putting everything together and choosing $\vep<1/(2\alpha_0^{1/2})$
proves \eqref{e:pk.asymp} in the case $\alpha_0=\alpha_2$. 

We now switch to proving \eqref{e:pk.asymp} in the case
$\alpha_0<\alpha_2$, so the function $f$ is given by \eqref{e:f2}.
Unlike the case $\alpha_0=\alpha_2$ where we could identify the
function $f$ as the moment generating function of a recognizable
probability law, in the present case this is easy to achieve only for
certain choices of the parameters. We instead appeal to the transfer
theorem contained in Theorem \ref{t:transfer} in the appendix.  We
start by showing that the function $f$ in \eqref{e:f2} can be extended
to a $\Delta$-analytic function in a $\Delta$-domain at
$\zeta=\alpha_2/\alpha_0$. As in the appendix, we use the principal
values for all complex logarithms and power functions. Changing the
variable of integration by $y=x+(1-x)t$, we can write
\begin{equation} \label{e:ext.2}
f(x)  = (\alpha_2-\alpha_0 x)^{A} \int_0^1 \bigl( \alpha_2-\alpha_0t
-x\alpha_0(1-t)\bigr)^{-(A+1)} (1-t)^{-(B+1)}\, dt, \ 0\leq x<1.
\end{equation}

Fix any $0<\phi<\pi_*/2$. The function $z\mapsto (\alpha_2-\alpha_0
z)^{A}$ is analytic in the domain
$\bigl\{ z\not=\alpha_2/\alpha_0, \,
|\text{arg}(z-\alpha_2/\alpha_0)|>\phi\bigr\}$, and the same is true
for the function $z\mapsto \bigl( \alpha_2-\alpha_0t
-z\alpha_0(1-t)\bigr)^{-(A+1)}$ for any $0<t<1$.  As above, this gives
the desired analytic extension of $f$ to the domain
$$
\bigl\{ z \mid |z|<R,\ z\neq \alpha_2/\alpha_0,\
|\arg(z-\alpha_2/\alpha_0)|>\phi \}
$$
with any $R>\alpha_2/\alpha_0$, as desired. 

Suppose first that $A<0$. Identifying $f$ in \eqref{e:ext.2} with its
extension, we see that the integral is continuous over the domain at
the singularity $\alpha_2/\alpha_0$ and, at the singularity, is equal to
$$
(\alpha_2-\alpha_0)^{-(A+1)}\int_0^1 t^{-(A+1)} (1-t)^{-(B+1)}\, dt
= (\alpha_2-\alpha_0)^{-(A+1)} B_*(-A,-B),
$$
where $B_*$ is the standard Beta function.   Then 
$$
f(z)  = (\alpha_2-\alpha_0 z)^{A} (\alpha_2-\alpha_0)^{-(A+1)}
B_*(-A,-B) + f_1(z),
$$
where $f_1$ is a $\Delta$-analytic function such that
$f_1(z) = o\bigl( (\alpha_2-\alpha_0 z)^{A}\bigr)$ as
$z\to\alpha_2/\alpha_0$ over $\Delta$. 
Now Theorem \ref{t:transfer}  proves \eqref{e:pk.asymp} in the case
$\alpha_0<\alpha_2$ and $A<0$. 

Still in the case $\alpha_0<\alpha_2$, we now consider the situation
$A\geq 0$ and $A$ not an integer. Let $m$ be an integer such that
$m< A<m+1$.  In our $\Delta$-domain at
$\alpha_2/\alpha_0$ we represent $f$ as a sum of $\Delta$-analytic
functions 
\begin{equation} \label{e:splitf.gh}
f(z)  = g(z)+h(z),
\end{equation}
where
$$
g(z)=(\alpha_2-\alpha_0 z)^{A} \sum_{j=0}^m {-(B+1)\choose j} (-1)^j \int_0^1 \bigl( \alpha_2-\alpha_0t
-z\alpha_0(1-t)\bigr)^{-(A+1)} t^j \, dt 
$$
and
$$
h(z) = (\alpha_2-\alpha_0z)^{A} \int_0^1 \bigl( \alpha_2-\alpha_0t
-z\alpha_0(1-t)\bigr)^{-(A+1)} R_m(t)\, dt
$$
with 
$$
R_m(t)= (1-t)^{-(B+1)}-\sum_{j=0}^m {-(B+1)\choose j} (-1)^jt^j. 
$$
For $z$ in the intersection of a small neighborhood of the
singularity with the domain, elementary integration shows that 
\begin{align*}
g(z)=&  -(\alpha_2-\alpha_0z)^A\frac{(\alpha_2-\alpha_0)^{-A}}{A} \sum_{j=0}^m {-(B+1)\choose
       j} (-1)^j\sum_{k=0}^j {j\choose k}\bigg/{A-1\choose k} \frac{(\alpha_2-\alpha_0)^k}{\alpha_0^{k+1}(z-1)^{k+1}}
\\
+& A^{-1}\sum_{j=0}^m  \frac{(\alpha_2-\alpha_0z)^j}{\alpha_0^{j+1}(z-1)^{j+1}} 
(-1)^j{-(B+1)\choose  j}
\biggr/{A-1\choose j}.
\end{align*}
For $z$ as above and every $\ell=1,2,\ldots$,
$$
\frac{1}{z-1} = \sum_{d=1}^\ell \frac{(\alpha_2/\alpha_0-z)^{d-1}}{(\alpha_2/\alpha_0-1)^{d}}
+
\frac{(\alpha_2/\alpha_0-z)^\ell}{(z-1)(\alpha_2/\alpha_0-1)^{\ell}}. 
$$
Using this with $\ell=1$, we see that for any $k\geq 0$,
$$
(z-1)^{-(k+1)} = \bigl( \alpha_2/\alpha_0-1\bigr) ^{-(k+1)}
+\sum_{d=1}^{k+1} a_{d,k} \frac{(\alpha_2/\alpha_0-z)^d}{(z-1)^d}
$$
for some constants $(a_{d,k})$. Furthermore,  with $\ell=m +1$ we can
write 
$$
(z-1)^{-(j+1)} =  \sum_{d'=1}^{j+1}\sum_{d=(m+1)d'}^{(j+1)(m+1)} b_{d, d',k} 
\frac{(\alpha_2/\alpha_0-z)^{d}}{(z-1)^{ d'}} + P_j\bigl( \alpha_2/\alpha_0-z\bigr)
$$
for some constants $(b_{d,d^\prime,k})$ and a polynomial of a finite order
$P_j$. Therefore, we can rewrite the function $g$ in the form
\begin{align} \label{e:g.final}
g(z)=& - (\alpha_2-\alpha_0z)^A\frac{(\alpha_2-\alpha_0)^{-(A+1)}}{A}  \sum_{j=0}^m {-(B+1)\choose
  j} (-1)^j\sum_{k=0}^j {j\choose k}\bigg/{A-1\choose k}\\
\notag +& P_*\bigl( \alpha_2/\alpha_0-z\bigr) + g_1(z),
\end{align}
where $P_*$ is a polynomial of a finite order and $g_1$ is a $\Delta$-analytic
function such that $g_1(z) = o\bigl( (\alpha_2-\alpha_0z)^A\bigr)$ as
$z\to \alpha_2/\alpha_0$ in $\Delta$. 

Next, the integral in the definition of the function $h$ s continuous over the domain at
the singularity $\alpha_2/\alpha_0$ and, at the singularity, is equal to
$$
(\alpha_2-\alpha_0)^{-(A+1)}\int_0^1 t^{-(A+1)} R_m(t)\, dt. 
$$
Therefore, we can write
\begin{align} \label{e:h.final}
h(z) =  (\alpha_2-\alpha_0z)^{A} (\alpha_2-\alpha_0)^{-(A+1)}\int_0^1
  t^{-(A+1)} R_m(t)\, dt + h_1(z), 
\end{align} 
where $h_1$ is a $\Delta$-analytic
function such that $h_1(z) = o\bigl( (\alpha_2-\alpha_0z)^A\bigr)$ as
$z\to \alpha_2/\alpha_0$ in $\Delta$.

We conclude by \eqref{e:g.final} and \eqref{e:h.final} that
\begin{equation} \label{e:final.f.nonint}
f(z) = (\alpha_2-\alpha_0z)^{A} d_{\pi,\rho}(\alpha_2-\alpha_0)^{-(A+1)}+ P_*\bigl( \alpha_2/\alpha_0-z\bigr) + f_1(z),
\end{equation} 
where $f_1$ is a $\Delta$-analytic
function such that $f_1(z) = o\bigl( (\alpha_2-\alpha_0z)^A\bigr)$ as
$z\to \alpha_2/\alpha_0$ in $\Delta$ and
\begin{align} \label{e:constant}
d_{\pi,\rho} = -A^{-1}\sum_{j=0}^m {-(B+1)\choose
  j} (-1)^j\sum_{k=0}^j {j\choose k}\bigg/{A-1\choose k}+
\int_0^1  t^{-(A+1)} R_m(t)\, dt, 
\end{align} 
and so by Theorem \ref{t:transfer},
$$
p_k\sim
(\alpha_2-\alpha_0)^{-(A+1)}\alpha_2^A
d_{\pi,\rho}(\alpha_0/\alpha_2)^k k^{-(A+1)}/\Gamma(-A).
$$
We now show that
\begin{equation} \label{e:const.nonint.proof}
d_{\pi,\rho} = \Gamma(-B)\Gamma(-A)/\Gamma(\delta),   
\end{equation}
which will imply \eqref{e:pk.asymp}. Indeed, letting $r_k = {j\choose
  k}/{A-k\choose k}$ for $k=0,\ldots,j$, we see that
$r_{k+1}/r_k=(j-k)/(A-1-k)$, so
$$
(A-k)r_k-(A-k-1)r_{k+1}=(A-j)r_k.
$$
Summing up this relation over $k=0,\ldots, j-1$ gives us $\sum_{k=0}^j
r_k = A/(A-j)$. If we denote $c_j(B)=(-1)^j {-(B+1)\choose j}$, then
\begin{align*}
d_{\pi,\rho} = \sum_{j=0}^m \frac{c_j(B)}{j-A}+\int_0^1
  t^{-(A+1)}\sum_{j=m+1}^\infty c_j(B)t^j\, dt
  =  \sum_{j=0}^\infty \frac{c_j(B)}{j-A}. 
\end{align*}
We view temporarily the right-hand side above as a function of
independent variables $A$ and $B$, which we call $d(A,B)$. We keep
$B<0$ fixed. Since
$c_0=1$ and $c_{j+1}(B)/c_j(B)=(B+1+j)/(j+1)$, we can write
\begin{align*}
d(A+1,B)=& -\frac{1}{A+1} + \sum_{j=0}^\infty \frac{c_{j+1}(B)}{j-A}
= -\frac{1}{A+1} + \sum_{j=0}^\infty \frac{c_{j}(B)}{j-A}\frac{B+j+1}{j+1}\\
=&-\frac{1}{A+1} +\frac{A+1+B}{A+1}d(A,B) -\frac{B}{A+1}\int_0^1
   c_j(B)t^j\, dt\\
=&-\frac{1}{A+1} +\frac{A+1+B}{A+1}d(A,B) -\frac{B}{A+1}\int_0^1
   (1-t)^{-(B+1)}\, dt \\
=& \frac{A+1+B}{A+1}d(A,B). 
\end{align*}
Therefore,
$$
d(A,B)= \prod_{i=0}^m \frac{A+B-j}{A-j}d(A-m-1,B).
$$
Since $A-m<1$,we can compute
\begin{align*}
d(A-m-1,B) =& \sum_{j=0}^\infty \frac{c_j(B)}{j-A+m+1} 
= \int_0^1 \sum_{j=0}^\infty c_j(B)t^{j-A+m}\, dt\\
=& \int_0^1 t^{-(A-m)}(1-t)^{-(B+1)}\, dt =
   \frac{\Gamma(-A+m+1)\Gamma(-B)}{\Gamma(-A-B+m+1)}. 
\end{align*}
Recalling again that $A+B=-\delta$, we conclude that
$$
d_{\pi,\rho} = d(A,B) = \prod_{i=0}^m \frac{A+B-j}{A-j} 
\frac{\Gamma(-A+m+1)\Gamma(-B)}{\Gamma(-A-B+m+1)} =
\Gamma(-B)\Gamma(-A)/\Gamma(\delta),
$$
as required. This completes the proof in the case
$\alpha_0<\alpha_2$ and $A>0$ is not an integer.

Suppose now that $\alpha_0<\alpha_2$ and $A$ is a nonnegative
integer.  Returning to \eqref{e:ext.2}, let $\sum_{n=0}^\infty q_n x^n$ be the
power series expansion of the integral in a neighborhood of the
origin. Then for $n\geq A$,
\begin{equation} \label{e:pn.qn}
p_n = \sum_{j=0}^A (-1)^j {A \choose j} \alpha_0^j\alpha_2^{A-j}
q_{n-j}.
\end{equation}
In order to evaluate the coefficients $(q_m)$, we rewrite the integral
as
$$
(\alpha_2-\alpha_0)^{-(A+1)} \int_0^1 \left(
  \frac{\alpha_2-\alpha_0}{\alpha_2-\alpha_0t-x\alpha_0(1-t)}\right)^{A+1}(1-t)^{-(B+1)}dt.
$$
Identifying the resulting integral as a mixture of the generating
functions of negative binomial distributions with $A+1$ successes and
probability for success $(\alpha_2-\alpha_0)/(\alpha_2-\alpha_0t)$, we
see that
\begin{align*}
q_n =& (\alpha_2-\alpha_0)^{-(A+1)} \int_0^1 (1-t)^{-(B+1)} \left(
  \frac{\alpha_2-\alpha_0}{\alpha_2-\alpha_0t}\right)^{A+1} \left(
  \frac{\alpha_0(1-t)}{\alpha_2-\alpha_0t}\right)^n {n+A \choose A}\, dt\\
=& {n+A \choose A} \alpha_0^n \int_0^1
   (1-t)^{n-(B+1)}(\alpha_2-\alpha_0t)^{-(n+A+1)}dt \\
=& {n+A \choose A} \alpha_2^B (\alpha_2-\alpha)^{\delta-1} \bigl(
   \alpha_0/\alpha_2\bigr)^n \int_0^1 u^{n-(B+1)}\bigl( 1-(\alpha_0/\alpha_2)u\bigr)^{-\delta}du\\
=& {n+A \choose A} \alpha_2^B (\alpha_2-\alpha)^{\delta-1} \bigl(
   \alpha_0/\alpha_2\bigr)^n \int_0^1 u^{n-(B+1)} \sum_{l=0}^\infty
   {-\delta\choose l} \bigl( -\alpha_0/\alpha_2\bigr)^l u^l du\\
=& {n+A \choose A} \alpha_2^B (\alpha_2-\alpha)^{\delta-1} \bigl(
   \alpha_0/\alpha_2\bigr)^n \sum_{l=0}^\infty
   {-\delta\choose l}  \bigl( -\alpha_0/\alpha_2\bigr)^l \frac{1}{n-B+l},
\end{align*}
where the third equality follows by the change of variable
$u=\alpha_2(1-t)/(\alpha_2-\alpha_0t)$.   
Substituting this expression into  \eqref{e:pn.qn} we obtain  
\begin{align*} 
  p_n = & \alpha_2^{-\delta} (\alpha_2-\alpha)^{\delta-1}
          (\alpha_0/\alpha_2)^n \sum_{l=0}^\infty
   {-\delta\choose l}  \bigl( -\alpha_0/\alpha_2\bigr)^l
\sum_{j=0}^A (-1)^j {A 
          \choose j} \frac{{n+A-j \choose A}}{n-j-B+l}.
\end{align*}
Notice that, for each fixed $l$, ${x+A\choose A}$ is a polynomial of
degree $A$ in a real variable $x$. Subtracting its value at $x_0=B-l$
makes $x_0$ a root, so there is a polynomial
$P_{A-1}(x)$ of degree $A-1$ such that
$$
{x+A\choose A} = (x-B+l) P_{A-1}(x) + {B-l+A\choose A}.
$$
Recalling that
\begin{equation} \label{e:eq2}
\sum_{j=0}^A (-1)^j {A 
  \choose j} j^d=0 \ \ \text{for} \ d=0,\ldots, A-1,
\end{equation}
we see that
$$
\sum_{j=0}^A (-1)^j {A 
  \choose j} P_{A-1}(n-j)=0. 
$$
Therefore,
\begin{align*}
p_n =&  \alpha_2^{-\delta} (\alpha_2-\alpha)^{\delta-1}
          (\alpha_0/\alpha_2)^n  
 \sum_{l=0}^\infty
   {-\delta\choose l}  \bigl( -\alpha_0/\alpha_2\bigr)^l{B-l+A\choose
     A} \\
&\hskip 2in\sum_{j=0}^A (-1)^j {A 
          \choose j} \frac{1}{n-j-B+l}\\
=&\alpha_2^{-\delta} (\alpha_2-\alpha)^{\delta-1}
          (\alpha_0/\alpha_2)^n  
 \sum_{l=0}^\infty
   {-\delta\choose l}  \bigl( -\alpha_0/\alpha_2\bigr)^l{B-l+A\choose
     A} \prod_{d=0}^A\frac{1}{n-B-l-d} (-1)^A A!, 
\end{align*}        
where the second equality can be checked using, once again,
\eqref{e:eq2}.  By the dominated convergence theorem, as $n\to\infty$, 
\begin{align*}
n^{A+1} &\sum_{l=0}^\infty
   {-\delta\choose l}  \bigl( -\alpha_0/\alpha_2\bigr)^l{B-l+A\choose
     A} \prod_{d=0}^A\frac{1}{n-B-l-d} \\
\to &\sum_{l=0}^\infty
   {-\delta\choose l}  \bigl( -\alpha_0/\alpha_2\bigr)^l{B-l+A\choose
     A} \\
= &{-\delta\choose A} \sum_{l=0}^\infty
   {-\delta-A\choose l}  \bigl( -\alpha_0/\alpha_2\bigr)^l
=   {-\delta \choose A} \bigl( 1-\alpha_0/\alpha_2\bigr)^{-\delta-A},
\end{align*}
and simple algebra proves \eqref{e:pk.asymp} in this case as well.

It remains to prove \eqref{e:pk.asymp} in the case
$\alpha_0>\alpha_2$. Now we take the function $f$ as given by
\eqref{e:f3.alt} and let $g(x) = f\bigl( (\alpha_2/\alpha_0)x\bigr),
\, 0\leq x<1$. It is elementary to check that $g$ can be written in
the form 
$$
g(x) = (\alpha_0-\alpha_2 x)^{B} \int_0^1 \bigl( \alpha_0-\alpha_2t
-x\alpha_2(1-t)\bigr)^{-(B+1)} (1-t)^{-(A+1)}\, dt, \ 0\leq x<1, 
$$
which is exactly the same function as that given by \eqref{e:ext.2}
with the roles of $\alpha_0$ and $\alpha_2$ interchanged, and the
roles of $A$ and $B$ interchanged. Since we already know the
asymptotic behaviour of the coefficients in the power series expansion
of the function in \eqref{e:ext.2} near the origin, the coefficients
$(\tilde p_k)$ in the power series expansion of $g$ near the origin
satisfy
$$
\tilde p_k\sim
(\alpha_0-\alpha_2)^{-(B+1)}\alpha_0^B\Gamma(-A)/\Gamma(\delta)
(\alpha_2/\alpha_0)^k k^{-(B+1)} \ \ \text{as} \ k\to\infty.
$$
Since $\tilde p_k = (\alpha_2/\alpha_0)^kp_k$ for every $k$ by the
definition of $g$, the statement of the proposition in the case
$\alpha_0>\alpha_2$ follows. 

\end{proof}

It is now easy  to prove Theorem \ref{thm:tails}. We have proved that
the limiting probabilities $(p_k)$ in
Theorem~\ref{thm:convergence+concentration} coincide with the
coefficients in the series expansions of the functions in
Proposition~\ref{pr:heart}, and the statement of Theorem
\ref{thm:tails}  follows from that proposition.

\begin{remark}
  {\rm 
The origin of the functions in \eqref{e:f1}, \eqref{e:f2} and
\eqref{e:f3} may initially appear somewhat mysterious. In fact,
starting with the recursion \eqref{eqn:rr}, one can first derive the
differential equation \eqref{e:ode} the moment generating function of
the potential limiting distribution must satisfy, and then solve the
equation to obtain the functions in \eqref{e:f1}, \eqref{e:f2} and
\eqref{e:f3}. Since the origin of these functions is irrelevant for
the argument, we have avoided describing the above process in detail.
}
\end{remark}

\appendix
\section{Approximations}\label{sec:approx}
Here we perform the necessary work to bound the deviations of the
random numbers of edges and vertices in $G(n)$, i.e. of $e_n$ and
$v_n$, from their linear approximations.

Let $((X_n,Y_n))$ be a random walk taking values in $\bbz^2$ that starts
with $(X_0,Y_0)=(1,2)$ and takes a step $(1,1)$ with probability
$\pi$, and a step $(-1,0)$ with probability $1-\pi$. 
We couple the number of edges process $(e_n)$ with the simple random
walk $(X_n)$. The coupling is obtained by running the 2-dimensional random walk
$((X_n,Y_n))$, and then constructing a realization of $(e_n)$ as
follows. The process $(e_n)$ takes the same steps as $(X_n)$ until the
first time $X_n$ becomes negative. At this moment $(e_n)$ moves to the 
point 1 and then takes the same steps as the process $(X_n)$ does after the
first return of the latter to the point 1. If the process $(X_n)$
becomes negative once again, the story repeats. This way the value of
$e_n$ is equal to the value of the random walk at a time point that
may be delayed until after time $n$. However, the delay does not
exceed the overall time $D$ the random walk spends in $\bbz_{\leq 0}$.
Since the random walk takes steps of size 1, we conclude that with
this coupling, $|e_n-X_n|\leq D$ for all $n$. 

It is well known (and easy to check) that $D$ has an exponentially
fast decaying tail: for some $0<\tau<1$, $\PP(D\geq m)\leq \tau^m$ for
all $m$, so under this coupling, 
$$
\mathbb{P}(|e_n-X_n|\geq m) \leq  \tau^m \ \text{for all}\ n,\, m.
$$
Applying Hoeffding's inequality to the deviations of the random walk
$(X_n)$ from its mean we see that   for any $1/2 < \rho < 1$ there is
$C_\rho\in (0,\infty)$ such that
\begin{equation}\label{approx:edge}
    \mathbb{P}\bigl(\big|e_n-(2\pi-1)n\big|>n^{\rho}\bigr) \leq
    e^{-n^{2\rho-1}/C_\rho} \ \text{for all} \ n. 
\end{equation}

Using the same 2-dimensional random walk $((X_n,Y_n)$ we couple the
number of vertices process $(v_n)$ with the random walk $(Y_n)$ in the
same manner: The process $(v_n)$ takes the same steps as $(Y_n)$ until the
first time $X_n$ becomes negative. At this moment $(v_n)$ takes a step
equal to 1 and then takes the same steps as the process $(Y_n)$ does
after the 
first return of the process $(X_n)$ to the point 1, etc.  With the
same $D$ as before, we then have $|v_n-Y_n|\leq D$ for all $n$, and we
conclude in the same manner that for any $1/2 < \rho < 1$ there is
$C_\rho\in (0,\infty)$ such that
\begin{equation}\label{approx:vertex}
    \mathbb{P}(|v_n-\pi n|> n^\rho)\leq  e^{- n^{2\rho-1}/C_\rho} \
    \text{for all} \ n. 
\end{equation}
A similar argument with $\rho=1$  says that there is $C_1\in
(0,\infty)$ such that for all $a>0$ 
\begin{equation}\label{approx:vertex_concentration}
    \mathbb{P}(|v_n-\pi n|>an)     \leq C_1\exp\bigl\{ - C_1 \min(a,a^2) \, n\bigr\} \ \text{for all} \ n.
\end{equation}

Here are some simple conclusions from the above bounds. It follows
from either \eqref{approx:edge} or  from \eqref{approx:vertex} that
\begin{align} \label{e:expect.frac}
    \mathbb{E}\left[1/(e_n+ 1)\right] = O(n^{-1}), \   \ \mathbb{E}\left[1/(e_n+\delta v_n+\delta)\right] = O(n^{-1}).
\end{align}
Next, since $\sum_k kN_k(n) =e_n$, we have by \eqref{approx:edge}, for 
any $1/2<\rho<1$, 
\begin{align} \label{e:diff.1}
    & \sum_{k=0}^{\infty} \left|\mathbb{E}\left[\frac{kN_k(n)}{e_n}\mathbbm{1}_{\{ e_n > 0 \}}\right]-\frac{\mathbb{E}\left[kN_k(n) \right]}{(2\pi-1)n}\right| 
    \leq   \sum_{k=0}^{\infty} \mathbb{E}\left[kN_k(n)\left|\frac{\mathbbm{1}_{\{ e_n > 0 \}}}{e_n}-\frac{1}{(2\pi-1)n}\right|\right]\\
\notag     & 
    \leq  \mathbb{E}\left[\left|1-\frac{e_n}{(2\pi-1)n}\right| \, \one\bigl(
      \big|e_n-(2\pi-1)n\big|\leq n^{\rho}\bigr)\right] \\
&+\mathbb{E}\left[\left|1-\frac{e_n}{(2\pi-1)n}\right| \, \one\bigl(
      \big|e_n-(2\pi-1)n\big|> n^{\rho}\bigr)\right]
       = O(n^{-(1-\rho)}). \notag 
\end{align}
Similarly
\begin{align} \label{e:diff.2}
\sum_{k=0}^{\infty}\left|\mathbb{E}\Big[\frac{N_k(n)}{v_n}\Big]-\frac{\mathbb{E}[N_k(n)]}{\pi n}\right| = O(n^{-(1-\rho)})
\end{align}
and
\begin{align} \label{e:diff.3}
 & \sum_{k=0}^{\infty}  \left|\mathbb{E}\left[\mathbbm{1}_{\{ e_n > 0 \}}\frac{(k+\delta)N_k(n)}{e_n+\delta
   v_n+\delta}\right]-\frac{(k+\delta)\mathbb{E}\left[N_k(n)
   \right]}{(2\pi-1)n+\delta \cdot \pi n}\right|  = O(n^{-(1-\rho)}).
\end{align}

\section{A transfer theorem} \label{sec:transfer}
\setcounter{thm}{0}\setcounter{Corol}{0}\setcounter{lemma}{0}\setcounter{pron}{0}\setcounter{equation}{0}
\setcounter{remark}{0}\setcounter{exam}{0}\setcounter{property}{0}\setcounter{defin}{0}

Transfer theorems relate the behaviour of certain complex-valued
functions analytic in a domain near singularities at the boundary of
the domain to the asymptotic behaviour of the coefficients in their
series expansion.  We follow \cite{flajolet:sedgewick:2009}. All
complex logarithms and power functions in this section are understood as their
principal branches. For a
function $f$ analytic in a neighborhood of the origin we denote by
$[z^n]f(z)$ the coefficient $a_n$ in its series expansion at the
origin, $f(z)=\sum_n a_nz^n$.   
\begin{defin}
    Given two numbers $\phi, R$ with $R>1$ and $0<\phi<\pi/2$  let 
\[
\Delta(\phi,R)
=
\{ z \mid |z|<R,\ z\neq 1,\ |\arg(z-1)|>\phi \}.
\]
A domain is a $\Delta$-domain at $1$ if it is $\Delta(\phi,R)$ for some $R$ and $\phi$.
For a complex number $\zeta\neq 0$, a $\Delta$-domain at $\zeta$ is the image under the mapping
$z\mapsto \zeta z$ of a $\Delta$-domain at $1$.
A function is $\Delta$-analytic if it is analytic in some $\Delta$-domain.
\end{defin}

The next statement is the key transfer theorem we will use; it is a
special case of Theorems VI.1, VI.3 and VI.4 in \cite{flajolet:sedgewick:2009}.

\begin{thm} \label{t:transfer}
Let $f(z)$ be a $\Delta$-analytic function
for a $\Delta$-domain at 
$\zeta>0$ such that
$$
f(z) = (1-z/\zeta)^{-\theta}+ o\bigl( (1-z/\zeta)^{-\alpha}\bigr)
$$
as $z\to\zeta$ in $\Delta$, 
for $\theta\in\bbc\setminus\bbz_{\leq 0}$ and $\alpha\in\bbr$. Then
$$
[z^n]f(z) =   \zeta^{-n}n^{\theta-1} /\Gamma(\theta) + o\bigl(
|\zeta|^{-n}n^{\alpha-1} 
\bigr)
$$
as $n\to\infty$.
\end{thm}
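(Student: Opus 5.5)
By the rescaling $z\mapsto \zeta z$, $[z^n]f(\zeta z)=\zeta^n[z^n]f(z)$, so I would reduce to $\zeta=1$ and a $\Delta$-domain $\Delta(\phi,R)$ at $1$. Write $f(z)=(1-z)^{-\theta}+g(z)$, where $g$ is $\Delta$-analytic and $g(z)=o\bigl((1-z)^{-\alpha}\bigr)$ as $z\to1$ in $\Delta$. By linearity it suffices to treat the two pieces separately, following Theorems VI.1 and VI.3 of \cite{flajolet:sedgewick:2009}.

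\emph{Main term.} By the binomial theorem,
$$[z^n](1-z)^{-\theta}=\binom{n+\theta-1}{n}=\frac{\Gamma(n+\theta)}{\Gamma(\theta)\Gamma(n+1)}.$$
This uses $\theta\notin\bbz_{\le0}$, so that $\Gamma(\theta)$ is finite and the coefficient is not eventually zero. Stirling's formula for the ratio of Gamma functions gives $\Gamma(n+\theta)/\Gamma(n+1)=n^{\theta-1}\bigl(1+O(1/n)\bigr)$. Hence this coefficient equals $n^{\theta-1}/\Gamma(\theta)+O\bigl(n^{\mathrm{Re}\,\theta-2}\bigr)$. The error is $o(n^{\alpha-1})$ provided $\alpha>\mathrm{Re}\,\theta-1$. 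This holds in the intended use $\alpha=\mathrm{Re}\,\theta$, and I would state this (implicit) restriction explicitly. If instead $\alpha\le \mathrm{Re}\,\theta-1$, one would have to keep further terms of the expansion of $(1-z)^{-\theta}$.

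\emph{Remainder term (the main step).} Write Cauchy's formula $[z^n]g=\frac1{2\pi i}\oint g(z)z^{-n-1}\,dz$ and deform the circle into a Hankel-type contour $\gamma=\gamma_1\cup\gamma_2\cup\gamma_3\cup\gamma_4$ lying inside $\Delta$. Fix $r$ with $1<r<R$ and an angle $\phi<\theta_0<\pi/2$. Then:
\begin{itemize}
\item $\gamma_1$ is the circle arc $|z-1|=1/n$ with $|\arg(z-1)|\ge\theta_0$;
\item $\gamma_2,\gamma_4$ are the two rays $z=1+te^{\pm i\theta_0}$, $1/n\le t\le T$, where $T$ is chosen so that they meet $|z|=r$;
\item $\gamma_3$ is the large arc of $|z|=r$.
\end{itemize}
On $\gamma_3$, $g$ is bounded and $|z|^{-n}=r^{-n}$, so this piece is exponentially small. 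On $\gamma_1$, $|g|=o(n^{\alpha})$, $|z^{-n-1}|=O(1)$ and the length is $2\pi/n$, giving $o(n^{\alpha-1})$.

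On the rays I would substitute $t=w/n$. Since $|1+(w/n)e^{i\theta_0}|\ge1+(w/n)\cos\theta_0$, we get $|z|^{-n-1}\le Ce^{-w\cos\theta_0}$ for $w\le n^{1/2}$, with exponential decay in $n$ beyond that. To use the $o(\cdot)$ hypothesis, fix $\varepsilon>0$ and pick $\eta$ such that $|g(z)|\le\varepsilon|1-z|^{-\alpha}$ for $|1-z|<\eta$. Split each ray at $t=\eta$:
\begin{itemize}
\item the near part is at most $\varepsilon\, n^{\alpha-1}\int_1^\infty w^{-\alpha}e^{-w\cos\theta_0}\,dw=O(\varepsilon\, n^{\alpha-1})$;
\item the far part is $O\bigl((1+\eta\cos\theta_0)^{-n}\bigr)$, which is exponentially small.
\end{itemize}
Letting $\varepsilon\downarrow0$ gives $[z^n]g=o(n^{\alpha-1})$. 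Undoing the rescaling then yields the claimed statement with the factor $\zeta^{-n}$.

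I expect the main obstacle to be this uniform contour estimate on the rays: handling the split at $\eta$ carefully so that the constant in front of $n^{\alpha-1}$ is controlled by $\varepsilon$ uniformly in $n$, with no dependence on the sign of $\alpha$. The $\int_1^\infty$ lower limit takes care of possibly large positive $\alpha$, and the exponential factor takes care of negative $\alpha$.
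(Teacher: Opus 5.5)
Your proof is correct and follows the route the paper relies on. The paper gives no proof of its own and simply cites Theorems VI.1 and VI.3 of \cite{flajolet:sedgewick:2009}, whose arguments are the coefficient asymptotics of $(1-z)^{-\theta}$ plus the Hankel-contour estimate for the $o(\cdot)$ remainder that you reproduce.

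Your added restriction $\alpha>\mathrm{Re}\,\theta-1$ is genuinely necessary, not just a convenience. For $f(z)=(1-z)^{-1/2}$ the remainder is identically zero, so the hypothesis holds for every real $\alpha$. Yet $[z^n]f-n^{-1/2}/\Gamma(1/2)$ is of exact order $n^{-3/2}$, so the conclusion fails for $\alpha\le -1/2$. The restriction is satisfied in the paper's applications, where one always takes $\alpha=\theta$ real.
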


\bibliographystyle{/Users/gs18/Documents/GenaFiles/texfiles/authyear}

\end{document}